\definecolor{verylight}{gray}{0.97}
\definecolor{light}{gray}{0.9}
\definecolor{medium}{gray}{0.85}
\definecolor{dark}{gray}{0.6}
\def\frk{\frak}               
\def\Phi{{\frk n}}
\def\Phi{{\frk N}}
\def\opn#1#2{\def#1{\operatorname{#2}}} 
\opn\chara{char} \opn\length{\ell} \opn\pd{pd} \opn\rk{rk}
\opn\projdim{proj\,dim} \opn\injdim{inj\,dim} \opn\rank{rank}
\opn\depth{depth} \opn\grade{grade} \opn\height{height}
\opn\bigheight{bighieght}
\opn\embdim{emb\,dim} \opn\codim{codim}
\opn\Tr{Tr} \opn\bigrank{big\,rank}
\opn\superheight{superheight}\opn\lcm{lcm}
\opn\trdeg{tr\,deg}
\opn\reg{reg} \opn\lreg{lreg} \opn\ini{in} \opn\lpd{lpd}
\opn\size{size}\opn\bigsize{bigsize}
\opn\cosize{cosize}\opn\bigcosize{bigcosize}
\opn\sdepth{sdepth}\opn\sreg{sreg}
\opn\link{link}\opn\fdepth{fdepth}
\opn\div{div} \opn\Div{Div} \opn\cl{cl} \opn\Cl{Cl}
\opn\Spec{Spec} \opn\Supp{Supp} \opn\supp{supp} \opn\Sing{Sing}
\opn\Ass{Ass} \opn\Min{Min}\opn\Mon{Mon} \opn\dstab{dstab} \opn\astab{astab}
\opn\Syz{Syz}
\opn\Ann{Ann} \opn\Rad{Rad} \opn\Soc{Soc}
\opn\Im{Im} \opn\Ker{Ker} \opn\Coker{Coker} \opn\Am{Am}
\opn\Hom{Hom} \opn\Tor{Tor} \opn\Ext{Ext} \opn\End{End}
\opn\Aut{Aut} \opn\id{id}
\opn\nat{nat}
\opn\pff{pf}
\opn\Pf{Pf} \opn\GL{GL} \opn\SL{SL} \opn\mod{mod} \opn\ord{ord}
\opn\Gin{Gin} \opn\Hilb{Hilb}\opn\sort{sort}
\opn\initial{init}
\opn\ende{end}
\opn\height{height}
\opn\type{type}
\opn\aff{aff} \opn\con{conv} \opn\relint{relint} \opn\st{st}
\opn\lk{lk} \opn\cn{cn} \opn\core{core} \opn\vol{vol}
\opn\link{link} \opn\star{star}\opn\lex{lex}
\opn\gr{gr}
\def\pot#1#2{#1[\kern-0.28ex[#2]\kern-0.28ex]}
\opn\dirlim{\underrightarrow{\lim}}
\opn\inivlim{\underleftarrow{\lim}}
\def\Implies{\ifmmode\Longrightarrow \else
        \unskip${}\Longrightarrow{}$\ignorespaces\fi}
\def\implies{\ifmmode\Rightarrow \else
        \unskip${}\Rightarrow{}$\ignorespaces\fi}
\def\iff{\ifmmode\Longleftrightarrow \else
        \unskip${}\Longleftrightarrow{}$\ignorespaces\fi}
\newtheorem{Theorem}{Theorem}[section]
 \newtheorem{Lemma}[Theorem]{Lemma}
 \newtheorem{Corollary}[Theorem]{Corollary}
 \newtheorem{Proposition}[Theorem]{Proposition}
 \newtheorem{Remark}[Theorem]{Remark}
 \newtheorem{Example}[Theorem]{Example}
 \newtheorem{Definition}[Theorem]{Definition}
\let\epsilon\varepsilon
\let\kappa=\varkappa
\def\qed{\ifhmode\textqed\fi
      \ifmmode\ifinner\quad\qedsymbol\else\dispqed\fi\fi}
\def\textqed{\unskip\nobreak\penalty50
       \hskip2em\hbox{}\nobreak\hfil\qedsymbol
       \parfillskip=0pt \finalhyphendemerits=0}
\def\dispqed{\rlap{\qquad\qedsymbol}}
\opn\dis{dis}
\def\pnt{{\raise0.5mm\hbox{\large\bf.}}}
\opn\Lex{Lex}
\begin{document}
 \title{Sequentially Cohen-Macaulay matroidal ideals}

 \author {Madineh Jafari, Amir Mafi* and Hero Saremi}

\address{M. Jafari, Department of Mathematics, University of Kurdistan, P.O. Box: 416, Sanandaj,
Iran.}
\email{Madineh.jafari3978@gmail.com}

\address{A. Mafi, Department of Mathematics, University of Kurdistan, P.O. Box: 416, Sanandaj,
Iran.}
\email{a\_mafi@ipm.ir}

\address{H. Saremi, Department of Mathematics, Sanandaj Branch, Islamic Azad University, Sanandaj, Iran.} 
\email{hero.saremi@gmail.com}

\subjclass[2010]{13C14,13F20, 05B35.}

\keywords{Sequentially Cohen-Macaulay, monomial ideals, matroidal ideals.\\
* Corresponding author}

\begin{abstract}
Let $R=K[x_1,...,x_n]$ be the polynomial ring in $n$ variables over a field $K$ and let $I$ be a matroidal ideal of degree $d$ in $R$. Our main focus is determining
when matroidal ideals are sequentially Cohen-Macaulay. In particular, all sequentially Cohen-Macaulay matroidal ideals of degree $2$ are classified.
Furthermore, we give a classification of sequentially Cohen-Macaulay matroidal ideals of degree $d\geq 3$ in some special cases.
\end{abstract}

\maketitle

\section*{Introduction}
Our goal is to classify the sequentially Cohen-Macaulay matroidal ideals. While for the Cohen-Macaulay property of matroidal ideals, a complete classification was given by Herzog and Hibi \cite{HH2}, the classification of the sequentially Cohen-Macaulay matroidal ideals seems to be much harder. In the present paper partial answers to this problem are given.
Herzog and Hibi \cite{HH1} were the first to give a systematic
treatment of polymatroidal ideals and they studied some combinatoric and algebraic properties related to it. They defined the polymatroidal ideal, a monomial ideal having the exchange property. A square-free polymatroidal ideal is called a matroidal ideal.
Herzog and Takayama \cite{HTa} proved that all polymatroidal ideals have linear quotients which implies that they have linear resolutions.
Herzog and Hibi \cite{HH2}  proved that a polymatroidal ideal $I$ is Cohen-Macaulay (i.e. CM) if and only if $I$ is a principal ideal, a Veronese ideal, or a square-free Veronese ideal.

Let $R=K[x_1,\dots,x_n]$ be the polynomial ring in $n$ indeterminate over a
field $K$ and $I\subset R$ be a homogeneous ideal. For a positive integer $i$, let
$(I_i)$ be the ideal generated by all forms in $I$ of degree $i$. We say that
I is componentwise linear if for each positive integer $i$, $(I_i)$ has a linear
resolution. Componentwise linear ideals were first introduced by Herzog and
Hibi \cite{HH} to generalize Eagon and Reiner's result that the Stanley-Reisner
ideal $I_{\Delta}$ of simplicial complex $\Delta$ has a linear resolution if and only if the Alexander dual $\Delta^{\vee}$ is C.M \cite{ER}. In particular, Herzog and Hibi \cite{HH} and Herzog, Reiner, and Welker \cite{HRW} showed that the Stanley-Reisner ideal $I_{\Delta}$ is componentwise linear if and only if $\Delta^{\vee}$ is sequentially Cohen-Macaulay(i.e. SCM).

It is of interest to understand the SCM matroidal ideals, and this paper
may be considered as a first attempt to characterize such ideals for matroidal ideals
in low degree or in a small number of variables. The remainder of this paper is organized as follows. Section 1 and 2 recall some definitions and results of componentwise linear ideals, simplicial complexes, and polymatroidal ideals.  Section 3 classifies all SCM matroidal ideals of degree $2$. Section 4 studies SCM matroidal ideals of degree $d\geq 3$ over polynomial rings of small dimensional.

For any unexplained notion or terminology, we refer the reader to \cite{HH3} and \cite{V}. Several explicit examples were  performed with help of the computer algebra systems Macaulay2 \cite{GS}.

\section{Preliminaries}
In this section, we recall some definitions and results used throughout
the paper.  As in the introduction, let $K$ be a field and $R=K[x_1,...,x_n]$ be the polynomial ring in $n$ variables over $K$ with each $\deg x_i=1$. Let $I\subset R$ be a monomial ideal and $G(I)$ be its unique minimal set of monomial generators of $I$.

 We say that a monomial ideal $I$ with $G(I)=\{u_1,...,u_r\}$ has {\it linear quotients} if there is an
ordering $\deg(u_1)\leq\deg(u_2)\leq...\leq\deg(u_r)$ such that for each $2\leq i\leq r$ the colon ideal $(u_1,...,u_{i-1}):u_i$ is generated by a subset $\{x_1,...,x_n\}$. 

 It is known that if a monomial ideal $I$ generated in single degree has linear quotients, then $I$ has a linear resolution (see \cite[Lemma 4.1]{CH}). In particular, a monomial ideal $I$ generated in degree $d$ has a linear resolution if and only if the Castelnuovo-Mumford regularity of $I$ is $\reg(I)=d$ (see \cite[Lemma 49]{Va}).
 
 \begin{Lemma} \cite[Corollary 20.19]{E}\label{reg}
 	If $0\longrightarrow A\longrightarrow B\longrightarrow C\longrightarrow 0$ is a short exact sequence of graded finitely generated $R$-modules, then
 	\begin{itemize}
 		\item [(a)]
 		$ \reg A\leq \max (\reg B, \reg C+1).$
 		\item [(b)] $\reg B\leq \max(\reg A,\reg C), $ 
 		\item [(c)]
 		$\reg C\leq \max (\reg A-1, \reg B).$
 		\item [(d)] If $A$ has a finite length, set $s(A)=\max \{s: A_s\neq 0\}$, then $\reg (A)=s(A)$ and the equality holds in $(b)$.
 	\end{itemize}
 \end{Lemma}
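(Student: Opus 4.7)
The plan is to prove all four parts simultaneously by working with the characterization of regularity via local cohomology,
\[
\reg M \;=\; \max\bigl\{\, i+j : H^{i}_{\mm}(M)_{j}\neq 0 \,\bigr\},
\]
and then reading off degree bounds from the long exact sequence
\[
\cdots \to H^{i-1}_{\mm}(C)_{j} \to H^{i}_{\mm}(A)_{j} \to H^{i}_{\mm}(B)_{j} \to H^{i}_{\mm}(C)_{j} \to H^{i+1}_{\mm}(A)_{j} \to \cdots
\]
induced by the short exact sequence $0\to A\to B\to C\to 0$. (One could equivalently argue with Tor modules and the long exact sequence of Tor, but local cohomology gives the cleanest bookkeeping.)

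For (a), I would fix $i,j$ with $H^{i}_{\mm}(A)_{j}\neq 0$; exactness in the middle forces either $H^{i}_{\mm}(B)_{j}\neq 0$, yielding $i+j\leq \reg B$, or else the class must come from $H^{i-1}_{\mm}(C)_{j}$, yielding $(i-1)+j\leq \reg C$, i.e.\ $i+j\leq \reg C+1$. Maximizing over $i,j$ gives $\reg A\leq \max(\reg B,\reg C+1)$. Parts (b) and (c) are entirely parallel: for (b) the middle term $H^{i}_{\mm}(B)$ is squeezed between $H^{i}_{\mm}(A)$ and $H^{i}_{\mm}(C)$, and for (c) the middle term $H^{i}_{\mm}(C)$ is squeezed between $H^{i}_{\mm}(B)$ and $H^{i+1}_{\mm}(A)$; in the latter case the index shift produces the $\reg A - 1$ on the right.

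For (d), the assumption that $A$ has finite length makes $H^{0}_{\mm}(A)=A$ and $H^{i}_{\mm}(A)=0$ for $i\geq 1$, so the defining formula gives $\reg A = \max\{j : A_{j}\neq 0\}=s(A)$. The long exact sequence then collapses to isomorphisms $H^{i}_{\mm}(B)\cong H^{i}_{\mm}(C)$ for all $i\geq 1$, together with a short exact sequence
\[
0 \to A \to H^{0}_{\mm}(B) \to H^{0}_{\mm}(C) \to 0,
\]
from which $\max\{j : H^{0}_{\mm}(B)_{j}\neq 0\}=\max\bigl(s(A),\max\{j : H^{0}_{\mm}(C)_{j}\neq 0\}\bigr)$. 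Combining the two contributions yields $\reg B = \max(\reg A,\reg C)$, which is the equality in (b).

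The only mildly delicate step is the last one: the inequalities in (a)--(c) follow mechanically from three-term exactness, but the equality in (d) requires the additional input that $A\hookrightarrow B$ lands inside the $\mm$-torsion $H^{0}_{\mm}(B)$ and that the connecting map $H^{0}_{\mm}(C)\to H^{1}_{\mm}(A)$ vanishes because $H^{1}_{\mm}(A)=0$. That vanishing is what upgrades the mere inequality $\reg B\leq \max(\reg A,\reg C)$ of (b) into an equality, and it is the one place where the finite-length hypothesis on $A$ is essential.
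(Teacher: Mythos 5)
The paper does not prove this lemma at all --- it is quoted verbatim from Eisenbud \cite[Corollary 20.19]{E} --- so there is no in-paper argument to compare against. Your proof via the local cohomology characterization $\reg M=\max\{i+j: H^i_{\mm}(M)_j\neq 0\}$ and the long exact sequence is correct, including the collapse of the sequence in part (d) that upgrades the inequality of (b) to an equality, and it is essentially the standard (Eisenbud's own) argument; the only ingredient you use without comment is the equivalence of that local-cohomology formula with the definition of regularity via minimal free resolutions, which is itself a theorem but a standard one.
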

 
 One of the important classes of monomial ideals with linear quotients is the class of polymatroid ideals.
 
Let $I\subset R$ be a monomial ideal generated in one degree. We say that $I$ is {\it polymatroidal} if the following "exchange condition" is satisfied: For any two monomials $u=x_1^{a_1}x_2^{a_2}\dots x_n^{a_n}$ and $v=x_1^{b_1}x_2^{b_2}\dots x_n^{b_n}$ belong to $G(I)$ such that $\deg_{x_i}(v)<\deg_{x_i}(u)$, there exists an index $j$ with $\deg_{x_j}(u)<\deg_{x_j}(v)$ such that $x_j(u/x_{i})\in G(I)$. The polymatroidal ideal $I$ is called {\it matroidal} if $I$ is generated by square-free monomials. Note that if $I$ is a matroidal ideal of degree $d$, then $\depth(R/I)=d-1$ (see \cite{C}). 
\begin{Theorem}\cite[Theorem 4.2]{HH2} \label{V}
	A polymatroidal ideal $I$ is CM if and only if $I$ is a principal ideal, a Veronese ideal, or a square-free Veronese ideal.
\end{Theorem}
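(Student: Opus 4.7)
\emph{If direction.} Each of the three cases is direct. A principal monomial ideal $(u)$ is a complete intersection, so $R/(u)$ is Cohen--Macaulay. For the Veronese ideal $\mm^d$ with $\mm=(x_1,\dots,x_n)$, $\sqrt{\mm^d}=\mm$, so $R/\mm^d$ is Artinian, hence Cohen--Macaulay. For the squarefree Veronese $I_{n,d}$, the Stanley--Reisner complex of $R/I_{n,d}$ is the $(d-2)$-skeleton of the $(n-1)$-simplex, which is shellable; Reisner's criterion then gives that $R/I_{n,d}$ is Cohen--Macaulay.

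\emph{Only if direction.} Let $I$ be a Cohen--Macaulay polymatroidal ideal of degree $d$ in $R=K[x_1,\dots,x_n]$. Restricting to $\supp(I)$, we may assume each $x_i$ divides some minimal generator. The degenerate cases are immediate: the exchange property forces $I=\mm$ when $d=1$, and $I=(x_1^d)$ (principal) when $n=1$.

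For the main step, I would split on whether $I$ is squarefree. If $I$ is matroidal, combine the cited formula $\depth(R/I)=d-1$ with Auslander--Buchsbaum to get $\pd(R/I)=n-d+1$; Cohen--Macaulayness then forces $\height(I)=n-d+1$ and $\dim(R/I)=d-1$. Using the exchange property together with this precise value of the height, I aim to show that every squarefree monomial of degree $d$ in $x_1,\dots,x_n$ lies in $G(I)$, identifying $I$ with $I_{n,d}$. If instead $I$ has a generator $u$ with $\deg_{x_i}(u)\geq 2$, iterated exchange from $u$ combined with the Cohen--Macaulay requirement that all associated primes share a common height should force $\sqrt{I}=\mm$, making $R/I$ Artinian so that $\mm^d\subset I$; since $I$ is equigenerated in degree $d$ this yields $I=\mm^d$.

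\emph{Main obstacle.} The crucial step is the uniformity argument in each branch: from the exchange property together with the numerical data ($\height(I)=n-d+1$ on one side, $\sqrt{I}=\mm$ on the other), one must deduce that $G(I)$ attains its maximum possible size. I expect the cleanest route is induction on $n$, exploiting closure of the polymatroidal class under matroid deletion and contraction (monomial localization and quotient by a variable), so that the smaller Cohen--Macaulay polymatroidal ideals arising are covered by the inductive hypothesis. A subtle point is verifying that Cohen--Macaulayness actually descends under these operations; this is where I expect to spend most of the technical effort.
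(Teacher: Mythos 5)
The paper does not actually prove this statement; it is quoted from Herzog and Hibi \cite[Theorem 4.2]{HH2}, so there is no internal proof to compare against. Judged on its own, your proposal is an outline whose two hard steps are precisely the ones left as declared intentions (``I aim to show'', ``should force''), and one of those steps rests on a false intermediate claim. In your non-squarefree branch you assert that a Cohen--Macaulay polymatroidal ideal possessing a generator $u$ with $\deg_{x_i}(u)\ge 2$ must satisfy $\sqrt{I}=\mm$. This already fails for $I=(x_1^2x_2)\subset K[x_1,x_2]$: it is polymatroidal (one generator, so the exchange condition is vacuous), fully supported, Cohen--Macaulay (a hypersurface), and not squarefree, yet its minimal primes are $(x_1)$ and $(x_2)$, so $\sqrt{I}\neq\mm$ and $\mm^3\not\subseteq I$. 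You carve out principal ideals only at $n=1$, but fully supported principal polymatroidal ideals exist for every $n$; the correct trichotomy is principal / non-principal squarefree / non-principal non-squarefree, and $\sqrt{I}=\mm$ can only be hoped for in the last class. Even there, the inference ``$\sqrt{I}=\mm$ and $I$ equigenerated in degree $d$ imply $\mm^d\subseteq I$'' is false for general monomial ideals (consider $(x_1^2,x_2^2)$); for polymatroidal ideals it does hold, because $\sqrt{I}=\mm$ forces $x_j^d\in G(I)$ for every $j$ and the exchange property (equivalently, the rank function of the underlying discrete polymatroid) then fills in all of $\mm^d$ --- but that argument still has to be written.

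In the squarefree branch, knowing only that $\height(I)=n-d+1$ is not enough: this says one hyperplane of the underlying matroid has $d-1$ elements, while a non-uniform matroid can have such an independent hyperplane alongside larger ones. What you actually need is unmixedness --- every minimal prime of height $n-d+1$, i.e.\ every hyperplane of size $d-1$ --- which does follow from Cohen--Macaulayness and does force the matroid to be uniform, hence $I$ to be the squarefree Veronese ideal; this is the combinatorial heart of the theorem and your proposal defers it entirely. Your fallback of induction via deletion and contraction also runs into the difficulty you yourself flag: monomial localization preserves both the polymatroidal property and Cohen--Macaulayness, but passing to $R/(I,x_i)$ does not obviously preserve Cohen--Macaulayness, since $x_i$ is in general a zerodivisor on $R/I$. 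As it stands the proposal is a plausible plan with the decisive steps missing and one branch incorrectly framed, not a proof.
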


\section{review on componentwise linear ideals}

For a homogeneous ideal $I$, we write $(I_i)$ to denote the ideal generated by the degree $i$ elements of $I$. Note that $(I_i)$ is different from $I_i$, the vector space of all degree $i$ elements of $I$. Herzog and Hibi
introduced the following definition in \cite{HH}.

\begin{Definition}
	A monomial ideal $I$ is componentwise linear if $(I_i)$ has a linear resolution for all $i$.
	\end{Definition}
A number of familiar classes of ideals are componentwise linear. For example, all ideals with linear resolutions, all stable ideals, all square-free strongly stable ideals are componentwise linear (see \cite{HH3}).

\begin{Proposition}\label{qc}\cite[Proposition 2.6]{FV}
	If $I$is a homogeneous ideal with linear quotients, then $I$ is componentwise linear.
\end{Proposition}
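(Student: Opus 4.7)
The plan is to reduce to the fact, recalled in the preliminaries, that any monomial ideal generated in a single degree with linear quotients has a linear resolution (\cite[Lemma 4.1]{CH}). So it suffices, for each $d$, to exhibit linear quotients on the minimal generators of $(I_d)$.

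Let $u_1, \ldots, u_r$ be the ordering of $G(I)$ witnessing linear quotients, arranged so that $\deg u_1 \leq \cdots \leq \deg u_r$, and set $d_j = \deg u_j$. Since a monomial of degree $d$ belongs to $I$ if and only if it is a multiple of some $u_j$ with $d_j \leq d$, one can write
$$(I_d) \;=\; \sum_{j \,:\, d_j \leq d} u_j \cdot \fm^{\,d-d_j},$$
where $\fm = (x_1, \ldots, x_n)$. I would list the minimal generators of $(I_d)$ in an order that first sorts by the smallest index $j$ such that $u_j$ divides the generator, and then, within a fixed $j$, by the standard linear-quotients order on the cofactor inside $\fm^{d-d_j}$.

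To verify linear quotients for this listing, fix a generator $v = u_j w$ and examine $(\text{earlier generators}) : v$. The contributions split into two types. From predecessors of the form $u_j w'$ with the same $j$, the colon contribution coincides with a colon computed inside $\fm^{d-d_j}$, and powers of $\fm$ have linear quotients in the chosen order, so these contributions yield variables. From predecessors with smaller first index, one uses the linear quotients of $I$: if $x_i$ is a variable with $x_i u_j \in (u_1, \ldots, u_{j-1})$, write $x_i u_j = u_{j'} h$ with $j' < j$; then $x_i v = u_{j'} h w$ should be divisible by some minimal generator $u_{j''} w''$ of $(I_d)$ that precedes $v$ in the listed order, which would show $x_i \in (\text{earlier}) : v$.

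The main obstacle is the last claim: one must verify that the minimal generator $u_{j''} w''$ dividing $u_{j'} h w$ can always be chosen with $j'' < j$ (or $j'' = j$ with cofactor earlier than $w$), so that it genuinely lies in the initial segment of the ordering on $G((I_d))$. This is where the hypothesis $\deg u_1 \leq \cdots \leq \deg u_r$ is essential, guaranteeing that one can descend from $u_{j'} h w$ to a minimal generator indexed by an earlier pair. Once this combinatorial bookkeeping is in place, $(I_d)$ has linear quotients for every $d$, whence \cite[Lemma 4.1]{CH} delivers a linear resolution for each $(I_d)$, and $I$ is componentwise linear.
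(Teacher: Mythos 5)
The paper does not prove this statement; it is imported verbatim from Francisco and Van Tuyl \cite[Proposition 2.6]{FV}, so there is no in-paper argument to measure yours against. Your direct proof via the decomposition $(I_d)=\sum_{j:\,d_j\le d}u_j\mathfrak{m}^{d-d_j}$ and the two-key ordering is a legitimate route, and the obstacle you flag at the end does close. Here is the bookkeeping. Write each minimal generator of $(I_d)$ canonically as $v=u_jw$ with $j$ the smallest index such that $u_j\mid v$ (note every degree-$d$ monomial of $I$ is a minimal generator of $(I_d)$, since $(I_d)$ is generated in the single degree $d$). First, minimality of $j$ forces that no variable of the prime ideal $(u_1,\dots,u_{j-1}):u_j$ divides $w$: otherwise $x_iu_j=u_{j'}h$ with $j'<j$ would give $u_{j'}\mid v$. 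Hence $(u_1,\dots,u_{j-1}):u_jw=(u_1,\dots,u_{j-1}):u_j$ is still generated by variables, and every colon generator $v_i/\gcd(v_i,v)$ coming from a predecessor $v_i$ with smaller first key lies in this ideal, so it is divisible by one of those variables $x_i$. Second, for such an $x_i$ one has $x_iv=u_{j'}hw$ with $j'<j$ and $\deg u_{j'}\le\deg u_j\le d$ (minimality of $G(I)$ forces $\deg h\ge 1$), so any monomial $w''$ of degree $d-\deg u_{j'}$ dividing $hw$ gives a minimal generator $u_{j'}w''$ of $(I_d)$ dividing $x_iv$ whose first key is at most $j'<j$; thus it precedes $v$ and $x_i$ lies in the colon, which is exactly the claim you left open.

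A second point you gloss over: the predecessors of $v$ sharing the first key $j$ correspond to only a \emph{subset} of the monomials of degree $d-d_j$, and linear quotients are not in general inherited by subsets of the generators. This is harmless here: if $w'$ precedes $w$ and $x$ is a variable with $x\mid w'/\gcd(w',w)$ and $xw$ divisible by some monomial $w'''$ of degree $d-d_j$ preceding $w$, then $u_jw'''$ is again a minimal generator of $(I_d)$, and it precedes $v$ either because its first key is smaller than $j$, or, if its first key equals $j$, because its canonical cofactor is $w'''$ (monomial cancellation), which precedes $w$. With these observations every generator of $(\text{earlier}):v$ is divisible by a variable lying in that colon ideal, so $(I_d)$ has linear quotients and, being generated in a single degree, a linear resolution by \cite[Lemma 4.1]{CH}. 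So your proposal is correct once completed as above; it is essentially the standard argument for this fact (compare Herzog--Hibi, \emph{Monomial Ideals}, Theorem 8.2.15) rather than a new route, and the only genuine omission is the verification you yourself identified.
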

If $I$ is generated by square-free monomials, then we denote by $I_{[i]}$ the ideal generated by the square-free monomials of degree $i$ of $I$.

\begin{Theorem}\cite[Proposition 1.5]{HH}
Let $I$ be a monomial ideal generated by square-free monomials. Then $I$ is componentwise linear if and only if $I_{[i]}$ has a linear resolution for all $i$.
\end{Theorem}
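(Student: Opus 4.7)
The plan is to prove both directions by induction on $i$, pivoting on the identity
\[ (I_i) = I_{[i]} + \mm (I_{i-1}), \]
valid for every square-free monomial ideal $I$ and every $i\ge 1$. First I would verify this identity: any minimal generator $u$ of $(I_i)$ is a degree-$i$ monomial in $I$; if $u$ is square-free then $u\in I_{[i]}$, and otherwise $x_k^2\mid u$ for some $k$. Writing $u=v\cdot m$ with $v$ a (square-free) minimal generator of $I$, at most one factor of $x_k$ can come from $v$, so $x_k\mid m$, giving $u/x_k = v\cdot (m/x_k)\in I$; hence $u\in \mm (I_{i-1})$.

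Next I would establish the auxiliary fact: \emph{if $J$ is a monomial ideal generated in a single degree $d$ with a linear resolution, then $\mm J$ has a linear resolution in degree $d+1$.} From the short exact sequence
\[ 0 \to \mm J \to J \to J/\mm J \to 0, \]
the quotient $J/\mm J$ is a $K$-vector space concentrated in degree $d$, so $\reg(J/\mm J)=d$; Lemma~\ref{reg}(a) then yields $\reg(\mm J)\le\max(d,d+1)=d+1$, and the reverse inequality holds since $\mm J$ is generated in degree $d+1$.

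Now I would induct on $i$, using the short exact sequence
\[ 0 \to I_{[i]} \cap \mm (I_{i-1}) \to I_{[i]} \oplus \mm (I_{i-1}) \to (I_i) \to 0. \]
For $(\Leftarrow)$, assuming $I_{[j]}$ has a linear resolution for all $j$, the inductive hypothesis gives $(I_{i-1})$ a linear resolution, hence so does $\mm (I_{i-1})$ by the auxiliary fact; the middle term has regularity $i$, and Lemma~\ref{reg}(c) yields $\reg(I_i)\le i$ once the intersection is shown to have regularity at most $i+1$. For $(\Rightarrow)$, assuming $(I_j)$ has a linear resolution for all $j$, Lemma~\ref{reg}(b) applied to the same sequence delivers $\reg(I_{[i]})\le i$, provided the intersection satisfies the required regularity bound.

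The main obstacle is controlling $\reg\bigl(I_{[i]}\cap\mm (I_{i-1})\bigr)$, since the sum of two linearly resolved ideals need not be linearly resolved in general. Iterating the key identity yields the decomposition $(I_i) = \sum_{k\le i}\mm^{\,i-k}I_{[k]}$, and one should exploit this together with the square-free structure of $I$ to find a further short exact sequence for the intersection whose outer terms have linear resolutions by the inductive hypothesis, thereby propagating the linear-resolution property to the intersection itself.
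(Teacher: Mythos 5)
The paper does not actually prove this statement: it is quoted verbatim from Herzog--Hibi \cite[Proposition 1.5]{HH}, so there is no in-paper argument to compare yours against. Judged on its own, your preparatory material is sound: the identity $(I_i)=I_{[i]}+\mm(I_{i-1})$ does hold for square-free monomial ideals by exactly the argument you give, and the auxiliary fact that $\mm J$ has a linear resolution whenever $J$ is generated in a single degree $d$ with a linear resolution follows correctly from Lemma \ref{reg}(a),(d) applied to $0\to\mm J\to J\to J/\mm J\to 0$. Both are genuine ingredients of the known proofs.

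However, the argument is not complete, and the missing piece is precisely the mathematical content of the theorem. Your entire inductive scheme hinges on bounding $\reg\bigl(I_{[i]}\cap\mm(I_{i-1})\bigr)$, and you do not do this; you only say that one should ``exploit the square-free structure to find a further short exact sequence.'' That intersection is a genuinely complicated monomial ideal (its minimal generators sit in degrees $i$ and $i+1$ according to whether a square-free degree-$i$ monomial of $I$ is divisible by a lower-degree generator), and there is no evident reason it inherits a linear or near-linear resolution from the hypotheses. Worse, for the direction ``componentwise linear $\Rightarrow$ $I_{[i]}$ has a linear resolution,'' Lemma \ref{reg}(b) applied to your Mayer--Vietoris sequence requires the regularity of the intersection to be at most $i$, which already fails whenever the intersection has a minimal generator of degree $i+1$. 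The standard route for that direction avoids the intersection altogether: one proves that if a monomial ideal generated in degree $i$ has a linear resolution then so does its square-free part (by a comparison of graded Betti numbers via restriction/polarization), and applies this to $(I_i)$, whose square-free part is exactly $I_{[i]}$. Your proposal offers no substitute for that lemma, so as written it is a plan with the hard step left open rather than a proof.
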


The notion of componentwise linearity is intimately related to the concept
of sequential Cohen-Macaulayness.
\begin{Definition}\cite{S}
	A graded $R$-module $M$ is called {\it sequentially Cohen-Macaulay} (SCM) if there exists a finite filtration of graded $R$-modules
	$0=M_0\subset M_1\subset...\subset M_r=M$ such that each $M_{i}/M_{i-1}$ is Cohen-Macaulay, and the Krull dimensions of the quotients are increasing:
	$$\dim(M_1/M_0)<\dim(M_2/M_1)<...<\dim(M_r/M_{r-1}).$$
	\end{Definition}

The theorem connecting sequentially Cohen-Macaulayness to componentwise
linearity is based on the idea of Alexander duality. We recall the definition of
Alexander duality for square-free monomial ideals and then state the fundamental
result of Herzog and Hibi \cite{HH} and Herzog, Reiner, and Welker \cite{HRW}.

Let $\Delta$ be a simplicial complex on the vertex set $V=\{x_1,x_2,...,x_n\}$, i.e., $\Delta$ is a collection of subsets $V$ such that $(1)$ $\{x_i\}\in \Delta$ for each $i=1,2,...,n$ and $(2)$ if $F\in \Delta$ and $G\subseteq F$, then $G\in\Delta$.
Let $\Delta^{\vee}$ denote the dual simplicial complex of $\Delta$, that is to say, $\Delta^{\vee}=\{V\setminus F\mid F\notin \Delta\}$.

If $I$ is a square-free monomial ideal, then the square-free {\it Alexander dual} of $I=(x_{1,1}...x_{1,n_1},...,x_{t,1}...x_{t,n_t})$ is the ideal  $I^{\vee}=(x_{1,1},...,x_{1,n_1})\cap...\cap(x_{t,1},...,x_{t,n_t}).$

We quote the following results which are proved in \cite{ER}, \cite{HH}, \cite{Te} and \cite{HT}.
\begin{Theorem}\label{T0}
	Let $I$ be a square-free monomial ideal of $R$. Then the following conditions hold:
	\begin{itemize}
		\item[(a)] $R/I$ is CM if and only if the Alexander dual $I^{\vee}$ has a linear resolution.
		\item[(b)] $R/I$ is SCM if and only if the Alexander dual $I^{\vee}$ is componentwise linear.
		\item[(c)] $\projdim(R/I)=\reg(I^{\vee})$.
		\item[(d)] If $y_1,...,y_r$ is an $R$-sequence with $\deg(y_i)=d_i$ and $I=(y_1,...,y_r)$, then $\reg(I)=d_1+...+d_r-r+1$.
	\end{itemize}
\end{Theorem}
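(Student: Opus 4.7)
The plan is to handle the four parts separately, leaning on Reisner's criterion, Hochster's formula for the Betti numbers of Stanley--Reisner ideals, combinatorial Alexander duality, and, in (d), the Koszul complex. Throughout, identify the square-free monomial ideal $I$ with $I_\Delta$ for a simplicial complex $\Delta$ on $\{x_1,\dots,x_n\}$, noting that $I^\vee = I_{\Delta^\vee}$ as defined in the excerpt.

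For (a), recall Reisner's criterion: $R/I_\Delta$ is CM if and only if $\tilde H_i(\lk_\Delta \sigma;K)=0$ for every face $\sigma\in\Delta$ and every $i<\dim\lk_\Delta\sigma$. On the dual side, $I_{\Delta^\vee}$ has a linear resolution if and only if all Betti numbers $\beta_{i,i+j}(I_{\Delta^\vee})$ outside the expected strand vanish. By Hochster's formula these Betti numbers are expressed as sums of reduced simplicial homologies of induced subcomplexes $(\Delta^\vee)_W$. Finally, the combinatorial Alexander duality isomorphism $\tilde H_i(\Delta_W;K)\cong \tilde H_{|W|-i-3}((\Delta^\vee)_{W};K)$ translates the Reisner vanishing for $\Delta$ into the Hochster vanishing for $\Delta^\vee$, and vice versa.

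For (b), use the classical fact that $R/I_\Delta$ is SCM if and only if, for every $i$, the pure $i$-dimensional skeleton $\Delta^{[i]}$ is CM; this is the step that converts the filtration-style definition into a face-ring statement. Then check that passing to $\Delta^{[i]}$ on the primal side corresponds on the dual side to passing to $(I_{\Delta^\vee})_{[j]}$ for the appropriate $j$. Apply (a) skeleton-by-skeleton, and combine with the square-free reformulation of componentwise linearity already recorded before the statement, to conclude. For (c), Terai's formula falls out of the same machine: both $\projdim(R/I_\Delta)$ and $\reg(I_{\Delta^\vee})$ are expressible, via Hochster's formula, as the maximum of $|W|$-shifted homological indices of $\tilde H_\bullet(\Delta_W;K)$ and $\tilde H_\bullet((\Delta^\vee)_W;K)$ respectively, and Alexander duality identifies these two maxima.

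For (d), since $y_1,\dots,y_r$ is a regular sequence, the Koszul complex $K_\bullet(y_1,\dots,y_r)$ is a minimal graded free resolution of $R/I$, with the last nonzero term $R(-d_1-\cdots-d_r)$ in homological degree $r$, and strictly smaller total shifts in earlier positions. Hence $\reg(R/I)=d_1+\cdots+d_r-r$, so $\reg(I)=d_1+\cdots+d_r-r+1$. The main obstacle, and the only step requiring genuine work rather than bookkeeping, is the dictionary in (b): one must verify carefully that pure skeletons $\Delta^{[i]}$ correspond to the degree-graded pieces $(I^\vee)_{[j]}$ of the Alexander dual, and that this correspondence transports the ascending CM filtration into the componentwise linear condition in all degrees simultaneously.
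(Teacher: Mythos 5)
The paper does not actually prove Theorem \ref{T0}: it is explicitly quoted from \cite{ER}, \cite{HH}, \cite{Te} and \cite{HT}, so there is no internal proof to compare against. Your outline is essentially the standard route taken in those references -- Reisner's criterion plus Hochster's formula plus combinatorial Alexander duality for (a) and (c), Duval's skeleton characterization of sequential Cohen--Macaulayness together with the identity between pure skeletons of $\Delta$ and the square-free components $(I^{\vee})_{[j]}$ for (b), and the Koszul resolution for (d), where the computation $\reg(R/I)=\max_{S}\bigl(\sum_{j\in S}d_j-|S|\bigr)=d_1+\cdots+d_r-r$ is correct because each $d_j\geq 1$.

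One point needs repair. The duality isomorphism you invoke in (a), namely $\tilde{H}_i(\Delta_W;K)\cong \tilde{H}_{|W|-i-3}((\Delta^{\vee})_{W};K)$, is not correct as stated: the induced subcomplex $(\Delta^{\vee})_W$ is not the Alexander dual of $\Delta_W$ inside $W$; rather one has $(\Delta^{\vee})_W=(\lk_{\Delta}(V\setminus W))^{\vee}$ (dual taken with respect to the vertex set $W$), equivalently $(\Delta_W)^{\vee}=\lk_{\Delta^{\vee}}(V\setminus W)$. This is exactly the identity you need, since Reisner's criterion is phrased in terms of links of $\Delta$ while Hochster's formula for $\beta_{i,j}(I_{\Delta^{\vee}})$ is phrased in terms of induced subcomplexes of $\Delta^{\vee}$; the version you wrote pairs induced subcomplexes on both sides and would not connect the two criteria. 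With that lemma corrected, the rest of your architecture for (a), (b) and (c) goes through as in Eagon--Reiner, Herzog--Hibi and Terai.
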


In the following if $G(I)=\{u_1,...,u_t\}$, then we set $\supp(I)=\cup_{i=1}^t\supp(u_i)$, where $\supp(u)=\{x_i: u=x_1^{a_1}...x_n^{a_n}, a_i\neq 0\}$. Also we set $\gcd(I)=\gcd(u_1,...,u_m)$ and $\deg(I)=\max\{\deg(u_1),...,\deg(u_m)\}.$

Throughout this paper we assume that all matroidal ideals are full supported, that is, $\supp(I)=\{x_1,...,x_n\}$.

\begin{Corollary}\label{3}\cite[Corollary 6.6]{FV}
	 Let $\Delta$ be a simplicial complex on $n$ vertices, and let $I_{\Delta}$ be it's Stanley-Reisner ideal, minimally generated by square-free monomials $m_1,...,m_s$. If $s\leq3$, so that $\Delta$ has at most three minimal nonfaces, or if $\supp(m_i)\cup\supp(m_j)=\{x_1,...,x_n\}$
	  for all $i\neq j$, then $\Delta$ is SCM.
	\end{Corollary}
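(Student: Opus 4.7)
The plan is to invoke Alexander duality via Theorem~\ref{T0}(b): $\Delta$ is SCM if and only if the Alexander dual $I_\Delta^\vee = P_{F_1} \cap \cdots \cap P_{F_s}$ is componentwise linear, where $F_i = \supp(m_i)$ and $P_{F_i} = (x_j : j \in F_i)$. By Proposition~\ref{qc}, componentwise linearity of $I_\Delta^\vee$ will follow once a linear-quotients ordering on $G(I_\Delta^\vee)$ is exhibited. Combinatorially, $G(I_\Delta^\vee)$ consists of the squarefree monomials whose support is a minimal transversal of $\{F_1,\dots,F_s\}$; assigning to each variable $x_j$ its type $\sigma(j) = \{i : j \in F_i\}$, the minimal generators correspond to minimal families $\{j_1,\dots,j_d\}$ with $\bigcup_k \sigma(j_k) = \{1,\dots,s\}$.

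For the case $s \leq 3$ I would treat each value of $s$ separately. When $s = 1$, $I_\Delta^\vee$ is a monomial prime, so componentwise linearity is immediate. When $s = 2$, the minimal generators split into those in degree $1$ (variables in $F_1 \cap F_2$) and those in degree $2$ (products $x_i x_j$ with $i \in F_1 \setminus F_2$, $j \in F_2 \setminus F_1$). When $s = 3$, the minimal generators have degree $1, 2,$ or $3$, sorted by how $\sigma$ distributes across $\{1,2,3\}$. In each situation I would order the generators by degree and then lexicographically, and check directly that the colon $(u_1,\dots,u_{i-1}) : u_i$ is generated by variables: whenever an earlier generator $u$ shares all but one coordinate with $u_i$, the quotient $u/\gcd(u,u_i)$ is a single variable, and the remaining colon contributions are absorbed by those variables.

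For the pairwise-cover case, the hypothesis $\supp(m_i) \cup \supp(m_j) = \{x_1,\dots,x_n\}$ for all $i \neq j$ is equivalent to the complements $G_i = V \setminus F_i$ being pairwise disjoint. Consequently each variable $x_j$ either lies in $C = \bigcap_i F_i$ or belongs to a unique $G_i$, so every minimal transversal of $\{F_1,\dots,F_s\}$ is either a singleton inside $C$ or a pair $\{j,k\}$ with $j \in G_a$, $k \in G_b$, $a \neq b$. Ordering the degree-$1$ generators (arbitrarily) first and then the degree-$2$ generators lexicographically yields linear quotients by the same colon computation. The principal obstacle in carrying out this plan is the combinatorial bookkeeping for $s = 3$, where one must control the interaction of degree-$2$ and degree-$3$ generators across the seven nonempty subsets of $\{1,2,3\}$; the pairwise-cover case is easier since only two degrees appear and the disjointness of the $G_i$ forces a rigid structure on the colons.
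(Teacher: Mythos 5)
First, a point of comparison: the paper does not prove this statement at all --- it is quoted from \cite[Corollary 6.6]{FV} --- so your proposal can only be measured against the proof there. Your reduction via Theorem~\ref{T0}(b) to showing that $I_\Delta^\vee=P_{F_1}\cap\cdots\cap P_{F_s}$ is componentwise linear is exactly the reduction used in \cite{FV}, and your handling of $s\le 2$ and of the pairwise-cover case (where the complements $G_i$ are disjoint, the degree-two generators form the edge ideal of a complete multipartite graph, and a linear-quotients order is easy to exhibit) is correct.

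The gap is the case $s=3$, which is the entire content of the first alternative in the hypothesis, and your proposal does not actually carry it out: ``order by degree and then lexicographically and check the colons directly'' defers the whole difficulty, and the order you name can genuinely fail. Concretely, take $P_1=(x_2,x_4)$, $P_2=(x_1,x_2)$, $P_3=(x_3,x_4)$ in $K[x_1,\dots,x_4]$, coming from $\Delta$ with the three minimal nonfaces $\{x_2,x_4\},\{x_1,x_2\},\{x_3,x_4\}$. Then $P_1\cap P_2\cap P_3=(x_1x_4,\,x_2x_3,\,x_2x_4)$, and in the degree-then-lex order with $x_1>x_2>x_3>x_4$ the first two generators are $x_1x_4$ and $x_2x_3$, whose colon $(x_1x_4):(x_2x_3)=(x_1x_4)$ is not generated by variables. (The ideal does have linear quotients, e.g.\ in the order $x_2x_4,x_1x_4,x_2x_3$, but lex does not find it; relabelling the variables produces the same failure for ascending lex.) The phenomenon you gloss over --- a pair of disjoint generators $u,w$ forces a ``bridging'' generator sharing a variable with each to occur \emph{earlier} in the order --- is precisely where the theorem lives, and lex does not respect it. To repair the argument you would have to specify and justify a different order valid for arbitrary $P_1,P_2,P_3$; note that exhibiting linear quotients for $I_\Delta^\vee$ is equivalent to shellability of $\Delta$, a strictly stronger conclusion than SCM, so this route, while plausibly viable, needs a real proof. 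The source \cite{FV} instead establishes componentwise linearity of the intersection of three monomial primes through its splitting machinery and regularity estimates on the components, not by producing a linear-quotients order.
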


\begin{Definition}
	Let $I$ be a monomial ideal of $R$. Then the big height of $I$, denoted by $bight(I)$, is $\max\{\height(\frak{p})| \frak{p}\in\Ass(R/I)\}$.
\end{Definition}

Note that, if $I$ is a matroidal ideal of degree $d$, then by Auslander-Buchasbum formula $bight(I)=n-d+1$.

\begin{Proposition}\cite[Corollary 6.4.20]{V}.\label{P0}
	Let $I$ be a monomial ideal of $R$ such that $R/I$ is SCM. Then $\projdim(R/I)=bight(I)$.
\end{Proposition}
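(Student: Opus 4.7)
The plan is to combine the Auslander-Buchsbaum formula with the filtration characterization of SCM. Since $R$ is a polynomial ring, $\projdim(R/I)+\depth(R/I)=n$, so the assertion reduces to showing $\depth(R/I)=n-bight(I)$. The inequality $\depth(R/I)\leq n-bight(I)$ is automatic and does not use the SCM hypothesis: for every $\fp\in\Ass(R/I)$ one has $\depth(R/I)\leq\dim(R/\fp)=n-\height(\fp)$, and taking the minimum over $\Ass(R/I)$ yields the bound.

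For the reverse inequality I would use the SCM filtration $0=M_0\subset M_1\subset\cdots\subset M_r=R/I$ with $M_i/M_{i-1}$ Cohen-Macaulay of dimension $d_i$ and $d_1<d_2<\cdots<d_r$. Iterating the short exact sequences gives $\Ass(R/I)\subseteq\bigcup_i\Ass(M_i/M_{i-1})$, and each Cohen-Macaulay layer $M_i/M_{i-1}$ is unmixed of dimension $d_i$, so every height appearing in $\Ass(R/I)$ lies in $\{n-d_1,\dots,n-d_r\}$. Since $\Ass(M_1)\subseteq\Ass(R/I)$ and $M_1$ has pure dimension $d_1$, the maximum such height equals $bight(I)=n-d_1$.

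To finish I would show $\depth(R/I)\geq d_1$ by induction on the length $r$, applied to $0\to M_1\to R/I\to R/I/M_1\to 0$. The quotient $(R/I)/M_1$ inherits an SCM filtration of length $r-1$ whose smallest dimension is $d_2>d_1$, so the induction hypothesis gives $\depth((R/I)/M_1)\geq d_2$, while $\depth M_1=d_1$ by the Cohen-Macaulay hypothesis. The depth lemma $\depth B\geq\min(\depth A,\depth C)$ applied to the sequence then yields $\depth(R/I)\geq d_1$, matching the upper bound and forcing equality.

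The main obstacle is verifying the identity $bight(I)=n-d_1$, i.e.\ confirming that the smallest-dimensional layer genuinely contributes an associated prime of maximal height; once the standard filtration lemma for associated primes is in hand, this is short, so the remainder of the argument is essentially bookkeeping.
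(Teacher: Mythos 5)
Your argument is correct. Note that the paper does not prove this statement at all---it is quoted from Villarreal's book \cite[Corollary 6.4.20]{V}---so there is no in-paper proof to compare against; your reduction via Auslander--Buchsbaum to $\depth(R/I)=n-\mathrm{bight}(I)$, the upper bound from $\depth(R/I)\leq\dim(R/\fp)$ for $\fp\in\Ass(R/I)$, the identification $\mathrm{bight}(I)=n-d_1$ from unmixedness of the Cohen--Macaulay layers together with $\Ass(M_1)\subseteq\Ass(R/I)$, and the inductive depth bound via the depth lemma constitute the standard proof of this fact. The only cosmetic point is that your induction should be stated for arbitrary graded modules admitting such a filtration (not just cyclic ones), since $(R/I)/M_1$ need not be of the form $R/J$; as you use only $\depth((R/I)/M_1)\geq d_2$, this causes no difficulty.
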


The following examples say that the converse of  Proposition \ref{P0} is not true even if $I$ is matroidal with $\gcd(I)=1$.

\begin{Example}
	Let $n=5$ and $I=(x_1x_2,x_1x_3,x_1x_4,x_1x_5,x_2x_3,x_2x_4,x_3x_5,x_4x_5)$ be an ideal of $R$. Then $I$ is a matroidal ideal of $R$ with $\projdim(R/I)=bight(I)$ but $I$ is not SCM.
\end{Example}
\begin{proof}
	It is clear that $I$ is a matroidal ideal and $$\Ass(R/I)=\{(x_1,x_3,x_4),(x_1,x_2,x_5),(x_2,x_3,x_4,x_5)\}.$$ Thus $I^{\vee}_{[3]}=(x_1x_3x_4,x_1x_2x_5)$ and so $\reg(I^{\vee}_{[3]})=4$. Hence
	$I^{\vee}$ is not componentwise linear resolution. Therefore $I$ is not SCM but $\projdim(R/I)=4=bight(I)$.
\end{proof}

\section{SCM matroidal ideals of degree 2}
In this section, we classify all SCM matroid ideals of degree 2.
\begin{Lemma}\label{L1}
Let $n=3$ and $I$ be a matroidal ideal in $R$ generated in degree $d$. Then $I$ is a SCM ideal.
\end{Lemma}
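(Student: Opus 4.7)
The plan is to exploit the very small combinatorial world we are in. With $R=K[x_1,x_2,x_3]$, the number of squarefree monomials of degree $d$ is $\binom{3}{d}$, which is $3$ for $d=1$, $3$ for $d=2$, $1$ for $d=3$, and $0$ for $d>3$. Hence for any squarefree monomial ideal $I$ in $R$ generated in degree $d$ we automatically have $|G(I)|\leq 3$. In particular, this bound applies to any matroidal ideal of degree $d$ in three variables, irrespective of which matroid is realised (uniform $U_{2,3}$, a parallel class, a coloop together with a parallel pair, etc.).

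Next, identify $I$ with the Stanley-Reisner ideal $I_\Delta$ of the simplicial complex $\Delta$ on $\{x_1,x_2,x_3\}$ whose minimal nonfaces are exactly the supports of the elements of $G(I)$. Since $|G(I)|\leq 3$, the complex $\Delta$ has at most three minimal nonfaces, which is precisely the hypothesis $s\leq 3$ in Corollary \ref{3}. Applying that corollary directly yields that $\Delta$ is sequentially Cohen-Macaulay, i.e.\ $R/I$ is SCM, which is what we want.

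The ``main obstacle'' is essentially nonexistent here: the statement is really a direct consequence of Corollary \ref{3} once one observes the generator count. As a cross-check (in case one wants an argument independent of Corollary \ref{3}), one could instead invoke Theorem \ref{T0}(b): enumerate the four matroidal possibilities up to permutation of the variables ($I=(x_1,x_2,x_3)$, $I=(x_1x_2,x_1x_3,x_2x_3)$, $I=(x_ix_j,x_ix_k)$, and $I=(x_1x_2x_3)$), compute the Alexander dual in each case, and verify directly that $I^\vee$ is componentwise linear (in fact each $(I^\vee)_{[i]}$ is either generated by a regular sequence, a single monomial, or is the squarefree Veronese, all of which have linear resolutions by Theorem \ref{T0}(d) or by Herzog-Takayama). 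Either route gives the conclusion in a couple of lines.
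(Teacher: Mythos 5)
Your proof is correct and is essentially identical to the paper's: both observe that in three variables a matroidal ideal of degree $d$ has at most $\binom{3}{d}\leq 3$ minimal generators and then apply Corollary~\ref{3}. The extra cross-check via Alexander duality is fine but not needed.
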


\begin{proof}
Let $n=3$, then every matroidal ideal in $R$ generated by at most three square-free monomials and so by  Corollary \ref{3} we have the result.
\end{proof}
\begin{Lemma}\label{i}
	Let $I$ be a monomial ideal of $R$ such that $I=(u_1,...,u_d)$ and $\deg(u_i)\leq\deg (u_d)=d$ for all $i$. If $\reg (I)=d$, then $\reg(I_i)=i$ for all $i>d$.
\end{Lemma}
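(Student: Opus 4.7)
The plan is to extract $\reg((I_i))$ from a single application of Lemma \ref{reg} to the short exact sequence
$$0\to(I_i)\to I\to I/(I_i)\to 0,$$
once the following structural claim is established: for every $i\geq d$, the quotient $I/(I_i)$ vanishes in all degrees $\geq i$ and hence has finite length. To verify the claim, take any monomial $mu_j\in I$ of degree $\geq i$, where $u_j$ is a generator of $I$ (of degree $d_j\leq d$) and $m$ is a monomial. Since $\deg m\geq i-d_j\geq 0$, I may factor $m=m_1m_2$ with $\deg m_2=i-d_j$; then $m_2u_j\in I$ has degree $i$, hence lies in $(I_i)$, which forces $mu_j=m_1(m_2u_j)\in(I_i)$. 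This shows $(I/(I_i))_s=0$ for all $s\geq i$, so the quotient has finite length.

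Given the claim, Lemma \ref{reg}(d) yields $\reg(I/(I_i))\leq i-1$. For $i>d$, combining $\reg(I)=d<i$ with Lemma \ref{reg}(a) applied to the sequence above produces
$$\reg((I_i))\leq\max\bigl(\reg I,\ \reg(I/(I_i))+1\bigr)\leq\max(d,i)=i.$$
Since $(I_i)$ is generated in the single degree $i$, the reverse inequality $\reg((I_i))\geq i$ is automatic, so $\reg((I_i))=i$. The only real content of the argument is the finite-length claim, which crucially uses that \emph{every} generator of $I$ has degree at most $d$: this is what guarantees the splitting $m=m_1m_2$ can always be carried out whenever $\deg(mu_j)\geq i>d\geq d_j$. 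Once the claim is in place, the remainder is a formal consequence of Lemma \ref{reg}.
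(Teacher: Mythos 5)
Your proof is correct and follows essentially the same route as the paper's: both arguments rest on the observation that $I/(I_i)$ has finite length concentrated in degrees $\leq i-1$ and then apply Lemma \ref{reg} to a short exact sequence involving this quotient. The only cosmetic difference is that the paper works with $0\to I/(I_i)\to R/(I_i)\to R/I\to 0$ and invokes the equality statement of Lemma \ref{reg}(d), whereas you work with $0\to (I_i)\to I\to I/(I_i)\to 0$ using part (a) together with the standard lower bound $\reg((I_i))\geq i$ for an ideal generated in degree $i$; your verification of the finite-length claim is in fact more detailed than the paper's.
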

\begin{proof}
	Consider the following exact sequence for $i>d$,
	$$0\longrightarrow\dfrac{I}{(I_i)}\longrightarrow\dfrac{R}{(I_i)}\longrightarrow\dfrac{R}{I}\longrightarrow0.$$

$l(\dfrac{I}{(I_i)})<\infty$, so by Lemma \ref{reg} (d) $\reg(\dfrac{I}{(I_i)})=i-1$ and $$\reg(\dfrac{R}{(I_i)})=\max \{\reg(\dfrac{R}{I}), \reg(\dfrac{I}{(I_i)})\}=\max\{d-1,i-1\}=i-1.$$ On the other hand $\reg(I_i)=\reg(\dfrac{R}{(I_i)})+1$, that is, $\reg (I_i)=i$ for all $i>d$.
\end{proof}
\begin{Proposition}\label{P1}
Let $I$ be monomial ideal which is componentwise linear in $R$. Then $J=(x_{n+1},I)$ is componentwise linear in $R'=K[x_1,...,x_n,x_{n+1}]$.
\end{Proposition}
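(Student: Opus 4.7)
The plan is to verify that $(J_i)$ has a linear resolution for every $i \ge 1$. The starting point is the explicit description
\[
(J_i) \;=\; x_{n+1}\,(\mm')^{i-1} \;+\; (I_i),
\]
where $\mm' = (x_1,\ldots,x_{n+1})$ denotes the graded maximal ideal of $R'$: indeed, a degree-$i$ element of $J$ is a sum of a multiple of $x_{n+1}$ of degree $i$ and a degree-$i$ element of $I$. Strictly speaking one should replace $I$ by the extension $IR'$, but this is harmless, since no minimal generator of $I$ involves $x_{n+1}$, and tensoring with $K[x_{n+1}]$ preserves both the minimal free resolution and the regularity of $(I_i)$.

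For $i = 1$ the ideal $(J_1)$ is generated by variables ($x_{n+1}$ together with any degree-$1$ generators of $I$), so it has a linear Koszul resolution. For the main case $i \ge 2$, I would set $A = x_{n+1}(\mm')^{i-1}$ and $B = (I_i)$ and use the short exact sequence
\[
0 \longrightarrow A \cap B \longrightarrow A \oplus B \longrightarrow A + B \longrightarrow 0.
\]
The key identification is $A \cap B = x_{n+1} \cdot B$, which holds because $x_{n+1}$ is a nonzerodivisor modulo $B$ (as $B$ is generated by monomials involving only $x_1,\ldots,x_n$); consequently $A \cap B \cong (I_i)(-1)$, and the componentwise linearity of $I$ gives $\reg(A \cap B) = i+1$. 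Similarly $A \cong (\mm')^{i-1}(-1)$, so $\reg(A) = i$ since powers of the maximal ideal have linear resolutions, and $\reg(B) = i$ by hypothesis. Applying Lemma \ref{reg}(c) to the displayed sequence yields
\[
\reg\bigl((J_i)\bigr) \;=\; \reg(A+B) \;\le\; \max\bigl\{\reg(A \cap B)-1,\;\reg(A),\;\reg(B)\bigr\} \;=\; i,
\]
and the reverse inequality $\reg((J_i)) \ge i$ is automatic since $(J_i)$ is generated in degree $i$. Hence $(J_i)$ admits an $i$-linear resolution, which is exactly what is needed.

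The main step requiring care is the intersection identity $A \cap B = x_{n+1}(I_i)$, from which everything else follows by routine regularity bookkeeping via Lemma \ref{reg}; the companion point that $\reg((I_i))$ is unchanged under the polynomial extension $R \hookrightarrow R'$ is harmless but should be noted explicitly. No genuine obstacle is anticipated beyond these two observations, both of which rest on the single fact that $x_{n+1}$ does not appear in any generator of $I$.
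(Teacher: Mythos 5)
Your proof is correct, but it takes a genuinely different route from the paper's. The paper argues by induction on the number $m$ of minimal generators of $I$: for the inductive step it sets $L=(x_{n+1},u_1,\dots,u_{m-1})$, observes $(J_j)=(L_j)$ for $j<d_m$, invokes the external result $\reg(x_{n+1},I)=\reg(I)$ from \cite[Lemma 3.2]{HD} to get $\reg(J)=d_m$ and hence linearity of the top component, and then disposes of all components of degree $j>d_m$ via Lemma \ref{i}. You instead treat every graded component at once through the decomposition $(J_i)=x_{n+1}(\mm')^{i-1}+(I_i)R'$, the Mayer--Vietoris sequence $0\to A\cap B\to A\oplus B\to A+B\to 0$, and the identification $A\cap B=x_{n+1}(I_i)R'$, which holds exactly because $x_{n+1}$ is a nonzerodivisor modulo an ideal generated by monomials in $x_1,\dots,x_n$; Lemma \ref{reg}(c) then gives $\reg((J_i))\le i$. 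Your two flagged subtleties are indeed the right ones and both check out: the extra degree-$i$ elements of $IR'$ of the form $ux_{n+1}^km$ with $k\ge 1$ are absorbed into $x_{n+1}(\mm')^{i-1}$, so the decomposition is legitimate, and regularity is preserved under the flat extension $R\to R'$. What your approach buys is uniformity and self-containedness: it needs no induction, no citation of \cite{HD}, and no separate treatment of degrees above the top generating degree; in particular it sidesteps the delicate point in the paper's proof where one must pass from $\reg(J)=d_m$ to the assertion that the single component $(J_{d_m})$ has a linear resolution. The price is that you must know $\reg((\mm')^{k})=k$, a standard fact the paper does not need. Both proofs are valid.
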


\begin{proof}
Suppose that $I=(u_1,...,u_m)$, where $\deg(u_i)=d_i$ and $d_{i-1}\leq d_{i}$ for $i=2,...,m$. We induct on $m$, the number of minimal generators of $I$. If $m=1$, then $I=(x_{n+1},u_1)$. Set $J'=x_{n+1}R'$. Note that $(J_j)=(J'_j)$ for all $j<d_1$ and so $(J_j)$ has a linear resolution for all $j<d_1$.
By Theorem \ref{T0}, $\reg(J)=d_1$. Thus $(J_{d_1})$ has a linear resolution and also $(J_j)$ has a linear resolution for all $j>d_1$, by using Lemma \ref{i}.

 Now, let $m>1$ and assume that the ideal $L=(x_{n+1},u_1,...,u_{m-1})$ is componentwise linear. Set $J=(L,u_m)=(I, x_{n+1})$. Note that $(J_j)=(L_j)$ for all $j<d_m$ and so $(J_j)$ has a linear resolution for all $j<d_m$. Hence by using \cite[Lemma 3.2]{HD} we have $\reg(J)=\reg(I)=d_m$. Therefore $(J_{d_m})$ has a linear resolution. Again, by using Lemma \ref{i}, we have $(J_j)$ has a linear resolution for all $j>d_m$. This completes the proof.
\end{proof}

\begin{Corollary}\label{C1}
Let $I$ be a SCM matroidal ideal in $R$ and let $J=x_{n+1}I$ be a monomial ideal in $R'=K[x_1,...,x_n,x_{n+1}]$. Then $J$ is a SCM matroidal ideal in $R'=k[x_1,...,x_n,x_{n+1}]$.
\end{Corollary}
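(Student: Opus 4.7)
The plan is to verify the two halves of the claim---matroidality of $J$, and sequential Cohen-Macaulayness of $R'/J$---separately, with the second part being where the substance lies.

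Matroidality of $J$ is routine. Since $x_{n+1}$ does not appear in any generator of $I$, the set $G(J) = \{x_{n+1}u : u \in G(I)\}$ consists of square-free monomials of one fixed degree. To verify the exchange property for two such generators $x_{n+1}u$ and $x_{n+1}v$, observe that any index $i$ with $\deg_{x_i}(x_{n+1}v) < \deg_{x_i}(x_{n+1}u)$ must satisfy $i\in\{1,\dots,n\}$; matroidality of $I$ applied to $u,v,x_i$ then produces an index $j$ with $x_j(u/x_i)\in G(I)$, whence $x_j(x_{n+1}u/x_i) = x_{n+1}\cdot x_j(u/x_i)\in G(J)$. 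Full-supportedness is immediate from $\supp(J)=\supp(I)\cup\{x_{n+1}\}=\{x_1,\dots,x_{n+1}\}$.

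For the SCM property, the approach is via Alexander duality. By Theorem~\ref{T0}(b), it suffices to show that $J^{\vee}$ is componentwise linear in $R'$. For each $u\in G(I)$ let $P_u$ denote the monomial prime of $R$ generated by the variables dividing $u$; then $I^{\vee}=\bigcap_{u\in G(I)} P_u$ and the key computation is
$$J^{\vee} \;=\; \bigcap_{u\in G(I)} \bigl(P_u+(x_{n+1})\bigr) \;=\; (x_{n+1}) + \bigcap_{u\in G(I)} P_u \;=\; (x_{n+1})+I^{\vee},$$
using the elementary fact that an intersection of ideals each containing a common element $x_{n+1}$ equals $(x_{n+1})$ plus the intersection of the reductions modulo $x_{n+1}$. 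With this identity in hand, since $I$ is SCM in $R$, Theorem~\ref{T0}(b) gives that $I^{\vee}$ is componentwise linear in $R$; Proposition~\ref{P1} then upgrades this to componentwise linearity of $J^{\vee}=(x_{n+1},I^{\vee})$ in $R'$; and a final application of Theorem~\ref{T0}(b) delivers the SCM property of $R'/J$.

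The only step requiring any real care is the Alexander dual identity $J^{\vee}=(x_{n+1},I^{\vee})$, which is where the specific form $J=x_{n+1}I$ genuinely enters; everything else is a direct chaining of results already established in the paper, most notably Proposition~\ref{P1}, which does all the heavy lifting on the componentwise-linearity side.
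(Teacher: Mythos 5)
Your proof is correct and follows essentially the same route as the paper's: compute $J^{\vee}=(x_{n+1},I^{\vee})$, note $I^{\vee}$ is componentwise linear since $I$ is SCM, and invoke Proposition~\ref{P1} together with Theorem~\ref{T0}(b). The extra verifications you supply (matroidality of $J$ and the justification of the Alexander dual identity) are details the paper leaves implicit, but the core argument is identical.
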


\begin{proof}
The Alexander dual of $J$ is $J^{\vee}=(x_{n+1},I^{\vee})$ and by our hypothesis on $I$, $I^{\vee}$ is componentwise linear resolution. Thus by Proposition \ref{P1}, $J^{\vee}$ is componentwise linear resolution. Thus $J$ is a SCM matroidal ideal of $R'$.
\end{proof}

One of the most distinguished polymatroidal ideals is the ideal of Veronese type. Consider the fixed positive integers $d$ and $1\leq a_1\leq ...\leq a_n\leq d$. The ideal of {\it Veronese type} of $R$ indexed by $d$ and $(a_1,...,a_n)$ is the ideal $I_{(d;a_1,...,a_n)}$ which is generated by those monomials $u=x_1^{i_1}...x_n^{i_n}$ of $R$ of degree $d$ with $i_j\leq a_j$ for each $1\leq j\leq n$.

\begin{Remark}\label{R1}
{\em Let $I$ be a SCM matroidal ideal in $R$ and let $J=x_{n+1}...x_mI$ be a monomial ideal in $R'=K[x_1,...,x_n,x_{n+1},...,x_m]$. Then, by induction on $m$, $J$ is a SCM matroidal ideal in $R'=K[x_1,...,x_n,x_{n+1},...,x_m]$. Hence for a SCM matroidal ideal $J$, we can assume that $\gcd(J)=1$. By using \cite[Lemma 2.16]{KM} all fully supported matroid ideals of degree $n-1(n\geq 2)$ are Veronese type ideals and then by Theorem \ref{V}, all matroidal ideals generated in degrees $d=1,n-1,n$ are SCM.}

\end{Remark}

\begin{Definition}
Let $I$ be a square-free Veronese ideal of degree $d$. We say that $J$ is an almost square-free Veronese ideal of degree $d$ when $J\neq 0$, $G(J)\subseteq G(I)$ and\\ ${\mid G(J)\mid\geq\mid G(I)\mid-1}$. Note that every square-free Veronese ideal is an almost quare-free Veronese ideal. Also, if $J$ is an almost square-free Veronese ideal of degree $n$, then $J$ is a square-free Veronese ideal.
\end{Definition}

\begin{Lemma} \label{L2}
Let $J$ be an almost square-free Veronese ideal of degree $d<n$. Then $J$ is a SCM matroidal ideal of $R$.
\end{Lemma}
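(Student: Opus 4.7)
The plan is to split into two cases. Either $J$ is itself the square-free Veronese ideal of degree $d$ on $\{x_1,\dots,x_n\}$, in which case Theorem \ref{V} shows $J$ is (matroidal and) Cohen--Macaulay, hence SCM; or $G(J)$ omits exactly one square-free monomial $u_0$ of degree $d$, which after relabeling the variables we may assume to be $u_0=x_1 x_2\cdots x_d$.

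I would first verify that $J$ is matroidal in the second case. Given $v,w\in G(J)$ and $x_i\in\supp(v)\setminus\supp(w)$, the exchange property requires some $x_j\in\supp(w)\setminus\supp(v)$ with $x_j(v/x_i)\in G(J)$, i.e., different from $u_0$. Among the candidates $j$, at most one can produce $u_0$, so if $|\supp(w)\setminus\supp(v)|\geq 2$ a good $j$ exists; in the remaining case $\supp(v)$ and $\supp(w)$ differ by a single variable swap, and the unique candidate $j$ gives $x_j(v/x_i)=w\in G(J)$.

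For SCM I would appeal to Theorem \ref{T0}(b) and show $J^\vee$ is componentwise linear. Writing $J^\vee=\bigcap_S (x_j:j\in S)$ over supports $S$ of generators of $J$, the monomial $\prod_{j\in T}x_j$ lies in $J^\vee$ precisely when $T$ meets every $d$-subset of $\{1,\dots,n\}$ different from $\{1,\dots,d\}$; a short case split on $|T^c|$ shows this is equivalent to $|T|\geq n-d+1$ or $T=\{d+1,\dots,n\}$. Hence the minimal generators of $J^\vee$ are $v_0:=x_{d+1}\cdots x_n$ of degree $n-d$ together with the degree $(n-d+1)$ square-frees whose support misses at least one of $\{x_{d+1},\dots,x_n\}$. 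It remains to check that each component $J^\vee_{[i]}$ has a linear resolution: $J^\vee_{[n-d]}=(v_0)$ is principal, and for $i\geq n-d+1$ every degree $i$ square-free satisfies $|T|\geq n-d+1$ and lies in $J^\vee$, so $J^\vee_{[i]}$ equals the full square-free Veronese ideal of degree $i$, which is polymatroidal and has linear quotients. Thus $J^\vee$ is componentwise linear and $J$ is SCM. The main technical hurdle is the matroidal check together with the correct identification of the allowed transversals $T$; after that, componentwise linearity reduces to the known linear resolutions of square-free Veronese ideals.
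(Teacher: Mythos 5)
Your argument is correct and follows essentially the same route as the paper: both proofs identify $J^\vee$ as the principal ideal on the degree-$(n-d)$ complement of the missing generator together with the full square-free Veronese ideal in degree $n-d+1$, and then check componentwise linearity degree by degree. The only differences are cosmetic --- you compute the dual directly from the intersection-of-primes description and verify the exchange property explicitly, whereas the paper gets both from the identity $J=(y_1,\dots,y_{n-d})\cap I$.
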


\begin{proof}
Suppose that $y_1,...,y_n$ is an arbitrary permutation of the variables of $R$ such that $\{y_1,...,y_n\}=\{x_1,...,x_n\}$ and let $I$ be a square-free Veronese ideal of degree $d$. We may assume that $I=J+(y_{n-d+1}y_{n-d+2}...y_n)$. Then we have $J=(y_1,...,y_{n-d})\cap I$ and so $J$ is a matroidal ideal. Therefore $J^{\vee}=(y_1...y_{n-d},I^{\vee})$. Set $J'=(y_1...y_{n-d})$.  Then, for all $i\leq n-d$, ${J^{\vee}}_{[i]}=J'_{[i]}$ and so it is componentwise linear. For all $i\geq n-d+1$, ${J^{\vee}}_{[i]}$ is a square-free Veronese ideal and so $J^{\vee}$ is a componentwise linear ideal. Hence $J$ is a SCM matroidal ideal, as required.
\end{proof}

From now on, we will let $y_1,...,y_n$ be an arbitrary permutation variables of $R$ such that $\{x_1,...,x_n\}=\{y_1,...,y_n\}$.

\begin{Theorem}\label{T1}
Let $J$ be a matroidal ideal of $R$ with $\deg(J)=2$ and $\gcd(J)=1$. Then $J$ is SCM if and only if there exists a permutation of variables such that the following hold:
\begin{itemize}
\item[(a)] $J=y_1\frak{p}+J'$, where $\frak{p}$ is a monomial prime ideal with $y_1\notin\frak{p}$, $\height(\frak{p})=n-1$ and $J'$ is a SCM matroidal ideal with $\supp(J')=\{y_2,...,y_n\}$ and $\gcd(J')=1$, or
\item[(b)] $J=y_1\frak{p}+y_2\frak{q}$, where $\frak{p}$ and $\frak{q}$ are monomial prime ideals with $y_1\notin\frak{p}$ and $y_1,y_2\notin\frak{q}$ such that $\height(\frak{p})=n-1$, $\height(\frak{q})=n-2$.

\end{itemize}
\end{Theorem}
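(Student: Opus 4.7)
\emph{Proof Plan.} I would prove both directions using the parallel-class decomposition $B_1, \dots, B_k$ of the rank-$2$ matroid attached to $J$: with $\frak{m}_a = (x_i : i \in B_a)$, one has $J = \sum_{a < b} \frak{m}_a \frak{m}_b$, the minimal vertex covers of the edge-graph are $\bigcup_{b \neq a} B_b$, and $J^\vee$ is generated by $u_a = \prod_{i \notin B_a} x_i$ of degree $n - |B_a|$.

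For the sufficient direction, case~(b) reduces to a direct calculation: the three minimal vertex covers $\{y_1, y_2\}$, $\{y_1, y_3, \dots, y_n\}$, $\{y_2, y_3, \dots, y_n\}$ give $J^\vee = (y_1 y_2,\, y_1 y_3 \cdots y_n,\, y_2 y_3 \cdots y_n)$, and each $(J^\vee)_{[i]}$ is either principal, or $y_1 y_2$ times a squarefree Veronese on $\{y_3, \dots, y_n\}$, or the full squarefree Veronese of degree $n-1$ on $\{y_1, \dots, y_n\}$; all have linear resolutions. In case~(a), $\height(\frak{p}) = n - 1$ with $y_1 \notin \frak{p}$ forces $\frak{p} = (y_2, \dots, y_n)$, hence $J^\vee = y_1 (J')^\vee + (y_2 \cdots y_n)$. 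By induction on $n$, the identity $(J^\vee)_{[i]} = y_1 \cdot ((J')^\vee)_{[i-1]}$ for $i < n-1$ gives linearity via the SCM of $J'$ (Theorem~\ref{T0}(b)) and the shift by $y_1$. For $i = n - 1$, apply the short exact sequence
\[
0 \longrightarrow (y_1 y_2 \cdots y_n) \longrightarrow y_1 ((J')^\vee)_{[n-2]} \oplus (y_2 \cdots y_n) \longrightarrow (J^\vee)_{[n-1]} \longrightarrow 0
\]
(its first term equals the intersection, since $y_2 \cdots y_n \in ((J')^\vee)_{[n-2]}$) and Lemma~\ref{reg}(c) to bound $\reg((J^\vee)_{[n-1]}) \le n-1$, giving linearity.

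For the necessary direction, Auslander-Buchsbaum and Theorem~\ref{T0}(c) give $\reg(J^\vee) = \projdim(R/J) = n-1$; componentwise linearity of $J^\vee$ then forces the maximal generator degree $n - \min_a |B_a|$ to equal $n-1$, so some $|B_a| = 1$. Relabel $B_1 = \{y_1\}$; then $J = y_1 \frak{p} + J'$ with $\frak{p} = (y_2, \dots, y_n)$, and $\gcd(J) = 1$ excludes $k = 2$. If $k = 3$ and a second class is singleton, the parallel-class sizes are $(1, 1, n-2)$, giving form~(b). If $k = 3$ with $|B_2|, |B_3| \ge 2$, I would show $J$ is not SCM by computing the multigraded Betti number $\beta_{1,\sigma}((J^\vee)_{[1 + \max(|B_2|, |B_3|)]})$ at $\sigma = y_1 y_2 \cdots y_n$: the upper Koszul simplicial complex has the vertex $y_1 u_{B_{\max}}$ isolated, because every candidate edge between $y_1 u_{B_{\max}}$ and a generator of the form $y_1 u_{B_{\min}} \cdot w$ has $\lcm = \sigma$, so $\tilde{H}_0 \neq 0$ and a non-linear first syzygy appears at degree $n > 2 + \max(|B_2|, |B_3|)$.

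In the remaining case $k \ge 4$, I would argue that $J'$ itself is SCM on $\{y_2, \dots, y_n\}$, with $\supp(J') = \{y_2, \dots, y_n\}$ (from $k - 1 \ge 3$) and $\gcd(J') = 1$ ($J'$ is not a star since $k - 1 \ge 3$), so form~(a) applies. The key transfer of componentwise linearity from $J^\vee$ to $(J')^\vee$ uses $(J^\vee)_{[i]} = y_1 \cdot ((J')^\vee)_{[i-1]}$ for $i \le n-2$ combined with flat base change $R \supseteq S = K[y_2, \dots, y_n]$ and the shift-by-$y_1$ equivalence, giving linearity of $((J')^\vee)_{[j]}$ for $j \le n-3$. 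The top component $((J')^\vee)_{[n-2]}$ is automatically linear because it is generated in degree $n-2$ by squarefree monomials in $n-1$ variables: all first syzygies land in the unique admissible squarefree degree $n-1 = (n-2) + 1$, and higher syzygies vanish for degree reasons. Induction on $n$ (base $n = 3$ via Lemma~\ref{L1}) completes the proof. The main obstacle is the multigraded Betti computation in the $k = 3$ non-SCM case, where one must carefully justify the isolated-vertex phenomenon in the upper Koszul complex and rule out cancellation of the resulting non-linear syzygy in the minimal resolution.
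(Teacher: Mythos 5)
Your plan is essentially correct, but it follows a genuinely different route from the paper, most visibly in the necessity direction. The paper never introduces the parallel-class decomposition of the rank-$2$ matroid: it extracts the singleton class homologically, using Proposition \ref{P0} to produce $\frak{p}\in\Ass(R/J)$ with $\height(\frak{p})=\projdim(R/J)=n-1$, writes $\frak{p}=(J:y_1)$, proves $\supp(J')=\{y_2,\dots,y_n\}$ by the exchange property, and then transfers componentwise linearity from $J^{\vee}=(\frak{p}^{\vee},y_1J'^{\vee})$ to $J'^{\vee}$ via the identity $(J^{\vee})_{[i]}=y_1(J'^{\vee})_{[i-1]}$ for $i\leq n-2$ together with a regularity estimate on $(J^{\vee})_{[n-1]}$; having shown $J'$ is SCM, it simply splits into (a) or (b) according to whether $\gcd(J')=1$. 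Since conditions (a) and (b) already carry the hypothesis that $J'$ is SCM, this transfer works uniformly for every $k\geq 3$, so the case you isolate ($k=3$ with $\lvert B_2\rvert,\lvert B_3\rvert\geq 2$) needs no separate treatment: your own transfer argument would place it in form (a) directly, and the fact that no such $J$ is actually SCM is not required for the equivalence. Your multigraded Betti computation there is nonetheless valid, and the ``cancellation'' you worry about is a non-issue, since Hochster-type formulas compute the minimal multigraded Betti numbers exactly; alternatively, $(J^{\vee})_{[1+\max_a\lvert B_a\rvert]}$ splits as $(u_2)+u_3\cdot(\text{squarefree Veronese})$ with intersection $(y_1\cdots y_n)$, and Lemma \ref{reg} then forces its regularity up to $n-1$, which is a cheaper way to the same conclusion. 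What your extra work buys is a sharper, fully explicit classification of which rank-$2$ matroids occur; what the paper's formulation buys is brevity. Your remaining deviations are improvements or cosmetic variants: reading off the singleton class from $\reg(J^{\vee})=n-1$ being the top generator degree of a componentwise linear ideal (do cite \cite[Corollary 8.2.14]{HH3}, as the paper does, for the fact that regularity of a componentwise linear ideal equals its maximal generator degree), the Mayer--Vietoris sequence replacing the paper's colon-ideal sequence in the sufficiency of (a), and the neat observation that $(J'^{\vee})_{[n-2]}$ is automatically linear because a squarefree ideal generated in degree $m$ in $m+1$ variables has all Betti degrees squarefree and hence concentrated in degrees $m$ and $m+1$, which lets you skip the paper's top-component estimate in the necessity direction.
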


\begin{proof}
$(\Longleftarrow).$ Consider the case $(a)$.  We have $J=\frak{p}\cap(y_1,J')$, then $J^{\vee}=(\frak{p}^{\vee},y_1J'^{\vee})$ and $\frak{p}^{\vee}\in(u)$ for all $u\in J'^{\vee}$. Since ${J^{\vee}}_{[i]}=y_1{J'^{\vee}}_{[i-1]}$ for all $i\leq n-2$, and ${J'^{\vee}}_{[i-1]}$ is componentwise linear, it follows that ${J^{\vee}}_{[i]}$ is componentwise linear for all $i\leq n-2$. now consider the exact sequence
$$0\longrightarrow R/(y_1J'^{\vee}:\frak{p}^{\vee})(-n+1)\xrightarrow{y_2y_3...y_n} R/y_1J'^{\vee}\longrightarrow R/(\frak{p}^{\vee},y_1J'^{\vee})\longrightarrow 0.$$
From $(y_1J'^{\vee}:\frak{p}^{\vee})=(y_1)$, we have $\reg(R/(y_1J'^{\vee}:\frak{p}^{\vee}))=0$. Since $\deg(\frak{p}^{\vee})=n-1$, we have   $\reg(R/(\frak{p}^{\vee},y_1J'^{\vee}))\geq n-2$. Since $y_1J'^{\vee}$ is componentwise linear and $\deg(u)\leq n-2$ for all $u\in J'^{\vee}$, by \cite[Corollary 8.2.14]{HH3} we have $\reg(R/y_1J'^{\vee})\leq n-2$. By using Lemma \ref{reg},
 \begin{align*}
 \reg(R/(\frak{p}^{\vee},y_1J'^{\vee}))& \leq\max\{\reg(R/(y_1J'^{\vee}:\frak{p}^{\vee})(-n+1))-1,\reg(R/y_1J'^{\vee})\}\\
 &=\max\{n-2,\reg(R/y_1J'^{\vee})\}.
 \end{align*}
  It therefore follows $\reg(R/(\frak{p}^{\vee},y_1J'^{\vee}))=n-2$. Thus ${J^{\vee}}_{[n-1]}$ has a linear resolution and so $J$ is a SCM ideal.
 
Let us consider the case $(b)$. $J=(y_1,y_2)\cap(y_1,\frak{q})\cap\frak{p}$ and so $J^{\vee}=(y_1y_2,y_1\frak{q}^{\vee},\frak{p}^{\vee})$.  It is clear that  $J^{\vee}$ is a monomial ideal with linear quotients. Thus, by Proposition \ref{qc}, $J^{\vee}$ is componentwise linear and so $J$ is a SCM ideal.

$(\Longrightarrow).$ Let $J$ be a SCM ideal. Then there exists $\frak{p}\in\Ass(R/J)$ such that $\height(\frak{p})=\projdim(R/J)=n-1$. Since $J=\cap_{i=1}^n(J:y_i)$ and $\deg(J)=2$, we can consider $\frak{p}=(J:y_1)$ and $\frak{p}=(y_2,...,y_n)$. Hence $J=y_1\frak{p}+J'$, where $J'$ is a matroidal ideal of degree $2$ in $K[y_2,...,y_n]$. We claim that $\supp(J')=\{y_2,...,y_n\}$. Let $y_l\notin\supp(J')$, where $l\geq 2$. Thus $y_1y_l, y_jy_k\in J$, where $j,k\geq 2$. Since $J$ is a matroidal ideal, it follows $y_ly_k$ or $y_ly_j\in J$. Hence $y_ly_k$ or $y_ly_j\in J'$ and this is a contradiction. Therefore
$\supp(J')=\{y_2,...,y_n\}$. $J=\frak{p}\cap(J',y_1)$, it follows that $J^{\vee}=(\frak{p}^{\vee},y_1J'^{\vee})$. For all $i\leq n-2$, we have ${J^{\vee}}_{[i]}=y_1{J'^{\vee}}_{[i-1]}$ and so ${J'^{\vee}}_{[i-1]}$ has a linear resolution for all $i\leq n-2$. Since ${J^{\vee}}_{[n-1]}=y_1{J'^{\vee}}_{[n-2]}+(\frak{p}^{\vee})$ and $\reg({J^{\vee}}_{[n-1]})=n-1$, it follows that $\reg({J'^{\vee}}_{[n-2]})\leq n-2$. Therefore ${J'^{\vee}}_{[n-2]}$ has a linear resolution and so $J'^{\vee}$ is componentwise linear. That is $J'$ is a SCM matroidal ideal of degree $2$. If $\gcd(J')=1$, then $J$ satisfy in the case $(a)$. If $\gcd(J')\neq 1$, then we have the case $(b)$. This completes the proof.
\end{proof}

\section{SCM matroidal ideals over polynomial rings of small dimensional}
 We start this section by the following fundamental lemma.
\begin{Lemma}\label{L3}
Let $n\geq 5$ and $J$ be a matroidal ideal of degree $d$ in $R$ and $\gcd(J)=1$. If $J$ is SCM, then
\begin{align*}
J=&y_1y_2...y_{d-1}\frak{p}+y_1y_2...y_{d-2}J_1+y_1y_2...y_{d-3}y_{d-1}J_2+...+\\
&y_1y_3...y_{d-1}J_{d-2}+y_2y_3...y_{d-1}J_{d-1}+J_d,
\end{align*}
 where $\frak{p}=(y_d,...,y_n)$ is a monomial prime ideal, $J_i$ is a SCM matroidal ideal of degree $2$ with $\supp(J_i)=\{y_d,y_{d+1},...,y_n\}$ for $i=1,...,d-1$ and $J_d\subseteq\cap_{i=1}^{d-1}J_i$.
\end{Lemma}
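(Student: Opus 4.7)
The plan is to stratify the minimal generators of $J$ by how many of the variables $y_1,\ldots,y_{d-1}$ divide them, define each $J_i$ combinatorially, and then propagate the properties of $J$ to the pieces via the matroidal exchange property and the Alexander dual characterization of the SCM property.

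Since $J$ is SCM, Proposition \ref{P0} gives $\projdim(R/J)=\text{bight}(J)=n-d+1$, so some $\frak{p}\in\Ass(R/J)$ has height $n-d+1$. Writing $\frak{p}=(J:u)$ with $u$ a squarefree monomial of degree $d-1$, we may permute the variables so that $u=y_1\cdots y_{d-1}$ and $\frak{p}=(y_d,\ldots,y_n)$; this forces $y_1\cdots y_{d-1}\frak{p}\subseteq J$. For each $i\in\{1,\ldots,d-1\}$, let $J_i\subseteq K[y_d,\ldots,y_n]$ be the degree-$2$ squarefree monomial ideal generated by those $m$ for which $(y_1\cdots\widehat{y_{d-i}}\cdots y_{d-1})\,m\in G(J)$, and let $J_d$ collect the remaining generators of $J$. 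The displayed decomposition is then immediate. From the exchange property of $J$ one extracts: $\supp(J_i)=\{y_d,\ldots,y_n\}$, by exchanging $y_{d-i}$ in $y_1\cdots y_{d-1}y_\ell$ against a generator of $J$ not divisible by $y_{d-i}$ (which exists because $\gcd(J)=1$); the matroidal property of $J_i$, by applying the exchange property of $J$ to a pair of lifts $(y_1\cdots\widehat{y_{d-i}}\cdots y_{d-1})\,m_j$ and noting that every admissible exchange variable must lie in $\{y_d,\ldots,y_n\}$ (since the other variables appear with equal degrees in both lifts); and $J_d\subseteq\bigcap_{i=1}^{d-1}J_i$, by performing exchanges between a generator $w\in G(J_d)$ and a suitable $y_1\cdots y_{d-1}y_\ell$ with $y_\ell\mid w$ (first, if needed, replacing the $y_{d-i}$-factor in $w$ by another variable to drop into the case $y_{d-i}\nmid w$) to exhibit a degree-$2$ divisor of $w$ lying in $J_i$.

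The main obstacle is showing that each $J_i$ is itself SCM. Since $J$ is SCM, Theorem \ref{T0}(b) tells us $J^\vee$ is componentwise linear, and the goal is to transfer this to $J_i^\vee\subseteq K[y_d,\ldots,y_n]$. The plan is to express each squarefree strand $(J_i^\vee)_{[k]}$ in terms of an appropriate strand of $J^\vee$, obtained by taking a colon with $y_1\cdots\widehat{y_{d-i}}\cdots y_{d-1}$ and then restricting to the variables $\{y_d,\ldots,y_n\}$, and to transfer the linear resolution strand-by-strand using Lemma \ref{reg} together with the regularity characterization of linear resolutions. Once this is in hand, Theorem \ref{T0}(b) returns SCM of $J_i$; the hypothesis $n\geq 5$ is invoked to rule out the low-dimensional degenerate situations already handled by Lemma \ref{L1} and Remark \ref{R1}.
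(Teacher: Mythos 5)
Your decomposition of $J$ and the combinatorial verifications (that $\supp(J_i)=\{y_d,\ldots,y_n\}$, that each $J_i$ inherits the exchange property, and that $J_d\subseteq\bigcap_{i=1}^{d-1}J_i$) follow the same stratification the paper uses and are essentially sound. The genuine gap is in what you yourself identify as ``the main obstacle'': you never actually prove that each $J_i$ is SCM. What you offer is only a plan --- ``express each squarefree strand $(J_i^\vee)_{[k]}$ in terms of an appropriate strand of $J^\vee$ \ldots{} and transfer the linear resolution strand-by-strand'' --- and this plan is not routine to execute, because Alexander duality does not interact simply with colon ideals and restriction of variables; making your strand comparison precise amounts to reproving, in an ad hoc way, exactly the structural facts you are missing. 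As written, the central assertion of the lemma is unproved.

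The paper closes this gap with a two-step reduction that your argument never invokes. First, the monomial localization (colon by a squarefree monomial) of a SCM ideal is again SCM; applied to $u_i=y_1\cdots\widehat{y_{d-i}}\cdots y_{d-1}$ this shows that $(J:u_i)=y_{d-i}\frak{p}+J_i$ is a SCM matroidal ideal of degree $2$. Second, the forward direction of the proof of Theorem \ref{T1} shows precisely that if a degree-$2$ SCM matroidal ideal has the form $y_{d-i}\frak{p}+J_i$ with $\frak{p}$ of maximal height, then $J_i$ is itself SCM with full support $\{y_d,\ldots,y_n\}$ (the support claim also comes out of that argument, rather than only from the exchange property as in your sketch). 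You should either cite and use these two facts, or supply a complete substitute for them; without one of the two your proof does not establish the lemma. A smaller point: your closing remark that $n\geq 5$ ``rules out low-dimensional degenerate situations'' is not an argument --- the hypothesis guarantees that the degree-$2$ ideals $J_i$ live in at least $n-d+1\geq 2$ variables and that the degree-$2$ classification of Theorem \ref{T1} is applicable, and this should be said explicitly rather than waved at.
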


\begin{proof}
 $J$ is a SCM matroidal ideal, then there is a prime ideal $\frak{p}\in\Ass(R/J)$ such that $\height(\frak{p})=\projdim(R/J)$. Since $\depth(R/J)=d-1$, it follows that $\height(\frak{p})=n-d+1$. For every square-free monomial ideal in $R$, we have $J=\cap_{i=1}^n(J:y_i)$. It follows that $\frak{p}=(J:y_1y_2...y_{d-1})$ and we can write $J=y_1...y_{d-1}\frak{p}+J'$, where $J'$ is a square-free monomial ideal of degree $d$. It is clear that $J'$ has a presentation $$J'=y_1y_2...y_{d-2}J_1+y_1y_2...y_{d-3}y_{d-1}J_2+...+y_1y_3...y_{d-1}J_{d-2}+y_2y_3...y_{d-1}J_{d-1}+J_d$$ and $J_d\subseteq\cap_{i=1}^{d-1}J_i$. Note that $\gcd(J)=1$ and $$(J:y_1y_2...y_{d-i-1}y_{d-i+1}...y_{d-1})=y_{d-i}\frak{p}+J_i,$$ we have $\height(J)\geq 2$ and so $J_i\neq 0$ for $i=1,...,d-1$. It is known that the localization of every SCM ideal is SCM and so $$(J:y_1y_2...y_{d-i-1}y_{d-i+1}...y_{d-1})=y_{d-i}\frak{p}+J_i$$ is a SCM matroidal ideal of degree $2$ for $i=1,...,d-1$. By using the proof of Theorem \ref{T1}, $J_i$ is a SCM matroidal ideal with $\supp(J_i)=\{y_d,y_{d+1},...,y_n\}$ for $i=1,...,d-1$.
\end{proof}

It is known that the localization of each SCM matroidal ideal is a SCM matroidal ideal. The following example shows that the converse is not true.
\begin{Example}
Let $n=4$ and $J=(x_1x_3,x_1x_4,x_2x_3,x_2x_4)$. Then $J$ is a matroidal ideal and $(J:x_i)$ is SCM matroidal for $i=1,2,3,4$; but $J$ is not SCM.
\end{Example}

\begin{proof}
It is clear that $J$ is matroidal and $(J:x_i)$ is SCM matroidal for $i=1,2,3,4$. Since $J^{\vee}=(x_1x_2,x_3x_4)$, it follows that $\reg(J^{\vee})=3$. Therefore $J$ is not SCM.
\end{proof}

From now on, as Lemma \ref{L3}, for a SCM matroidal ideal $J$ of degree $d$ and $\gcd(J)=1$ in $R$ with $n\geq 5$, we can write $$J=y_1...y_{d-1}\frak{p}+y_1y_2...y_{d-2}J_1+y_1y_2...y_{d-3}y_{d-1}J_2+...+y_2y_3...y_{d-1}J_{d-1}+J_d,$$where $\frak{p}=(y_d,...,y_n)$ is a monomial prime ideal, $J_i$ is a SCM matroidal ideal of degree $2$ with $\supp(J_i)=\{y_d,y_{d+1},...,y_n\}$ for $i=1,...,d-1$ and $J_d\subseteq\cap_{i=1}^{d-1}J_i$.

 Note that if for instance $\gcd(J_1)=y_d$, then we have $$J=y_1...y_{d-1}\frak{p}+y_1y_2...y_{d-2}y_d\frak{q}+y_1y_2...y_{d-3}y_{d-1}J_2+...+y_1y_3...y_{d-2}J_{d-2}+y_2y_3...y_{d-1}J_{d-1}+J_d,$$ where $\frak{q}=(y_{d+1},...,y_n)$.

Bandari and Herzog in \cite[Proposition 2.7]{BH} proved that if $n=3$ and $J$ is a matroidal ideal with $\gcd(J)=1$, then $J$ is a square-free Veronese ideal and so by Theorem \ref{V}, it is CM (see also \cite[Proposition 1.5]{MN}). In the following proposition we prove this result in the case $n=4$ for SCM ideals.

\begin{Proposition}\label{P2}
Let $n=4$ and $J$ be a matroidal ideal of $R$ of degree $d$ and $\gcd(J)=1$. Then $J$ is a SCM ideal if and only if $J$ is
\begin{itemize}
\item[(a)] a square-free Veronese ideal, or
\item[(b)] an almost square-free Veronese ideal.
\end{itemize}
\end{Proposition}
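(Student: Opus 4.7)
The plan is to prove the two implications separately. The direction $(\Longleftarrow)$ is immediate: by Theorem \ref{V} a square-free Veronese ideal is Cohen-Macaulay, hence SCM, while by Lemma \ref{L2} every almost square-free Veronese ideal is SCM.

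For $(\Longrightarrow)$, I would split on the degree $d$ of $J$. Since $J$ is fully supported we have $d \le n = 4$, and the hypothesis $\gcd(J) = 1$ immediately rules out $d = 4$, where the only matroidal option would be the principal ideal $(y_1 y_2 y_3 y_4)$. For $d = 1$, full support forces $J = (y_1, y_2, y_3, y_4)$, the degree-one square-free Veronese ideal. For $d = 3 = n - 1$, Remark \ref{R1} shows that $J$ is of Veronese type $I_{(3; a_1, a_2, a_3, a_4)}$; square-freeness forces each $a_j \le 1$ and full support forces each $a_j \ge 1$, so $a_j = 1$ for every $j$ and $J$ is the square-free Veronese ideal of degree three.

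The substantive case is $d = 2$, which I would attack via Theorem \ref{T1}. After a suitable reordering of variables, $J$ fits into one of the two forms given there. In case (a), $J = y_1 \frak{p} + J'$ with $\frak{p} = (y_2, y_3, y_4)$ the unique monomial prime of height $3$ missing $y_1$, while $J'$ is an SCM matroidal ideal of degree $2$ in $K[y_2, y_3, y_4]$ that is fully supported and satisfies $\gcd(J') = 1$; the only such ideal is $(y_2y_3, y_2y_4, y_3y_4)$, so $J$ expands to the full square-free Veronese ideal of degree two on four variables. In case (b), the height conditions force $\frak{p} = (y_2, y_3, y_4)$ and $\frak{q} = (y_3, y_4)$, yielding $J = (y_1 y_2, y_1 y_3, y_1 y_4, y_2 y_3, y_2 y_4)$, an almost square-free Veronese ideal (only $y_3 y_4$ is omitted).

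The main obstacle I anticipate is the classification step inside case (a) of Theorem \ref{T1}: one must verify that on three variables, the only matroidal ideal of degree $2$ that is fully supported and has trivial $\gcd$ is the square-free Veronese ideal $(y_2 y_3, y_2 y_4, y_3 y_4)$. This reduces to examining the proper subsets of this generating set and noting that any smaller candidate (a singleton, or any pair) either fails full support or has all of its generators sharing a variable, contradicting $\gcd(J') = 1$. Apart from this step, the argument is low-dimensional bookkeeping guided by Theorem \ref{T1}, Lemma \ref{L2}, and Remark \ref{R1}.
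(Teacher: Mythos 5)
Your proposal is correct and follows essentially the same route as the paper: the backward direction via Theorem \ref{V} and Lemma \ref{L2}, the cases $d=1,3,4$ via Remark \ref{R1} (i.e.\ the Veronese-type classification of degree-$(n-1)$ matroidal ideals), and the case $d=2$ via the two alternatives of Theorem \ref{T1}. The only cosmetic difference is that where you verify by hand that the unique fully supported degree-$2$ matroidal ideal with trivial $\gcd$ on three variables is the square-free Veronese ideal, the paper instead invokes the Bandari--Herzog result \cite[Proposition 2.7]{BH} quoted just before the proposition; both are fine.
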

\begin{proof}
$(\Longleftarrow)$ is clear by Theorem \ref{V} and Lemma \ref{L2}.

$(\Longrightarrow)$. If $d=1,3, 4$, then by Theorem \ref{V} and \cite[Lemma 2.16]{KM} $J$ is a square-free Veronese ideal.
If $d=2$, then by Theorem \ref{T1}, $J=y_1\frak{p}+J'$, where $\frak{p}$ is a monomial prime ideal with $y_1\notin\frak{p}$, $\height(\frak{p})=3$ and $J'$ is a SCM matroidal ideal with $\supp(J')=\{y_2,y_3,y_4\}$. If $\gcd(J')=1$, then $J'$ is a square-free Veronese ideal and so is $J$.
If $\gcd(J')\neq 1$, then $J'$ is an almost square-free Veronese ideal.
\end{proof}
\begin{Proposition}
Let $n=4$ and $J$ be a matroidal ideal of $R$ of degree $d$. Then $J$ is a SCM ideal if and only if $\projdim(R/J)=bight(J)$.
\end{Proposition}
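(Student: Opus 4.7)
The $\Rightarrow$ direction is immediate from Proposition \ref{P0}. For the converse I would assume $\projdim(R/J) = bight(J)$ and show $J$ is SCM by reducing to the classification established in Proposition \ref{P2}.

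The first move is to reduce to the case $\gcd(J)=1$. If $g := \gcd(J) \neq 1$, write $J = g J''$ where $J''$ is matroidal with $\gcd(J'')=1$ in the polynomial subring on the variables not dividing $g$; since $n=4$, this subring has at most three variables, so Lemma \ref{L1} gives that $J''$ is SCM, and iterating Corollary \ref{C1} (as in Remark \ref{R1}) gives that $J$ is SCM. This sub-case actually does not require the hypothesis $\projdim(R/J) = bight(J)$ at all.

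With $\gcd(J)=1$, the cases $d=1,3,4$ are easy: full support forces $J = (x_1,x_2,x_3,x_4)$ when $d=1$, $J = (x_1x_2x_3x_4)$ when $d=4$, and for $d=3$ the classification of fully supported matroidal ideals of degree $n-1$ cited in Remark \ref{R1} forces $J$ to be the degree-$3$ square-free Veronese ideal; in each case $J$ is Cohen--Macaulay by Theorem \ref{V}. The substantive case is therefore $d=2$, and by Proposition \ref{P2} it suffices to show that the hypothesis forces $J$ to be a square-free Veronese or almost square-free Veronese ideal.

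The plan here is to enumerate all matroidal ideals of degree $2$ with full support and $\gcd(J)=1$ in four variables. A direct application of the exchange condition leaves, up to relabelling of the variables, exactly three possibilities: the square-free Veronese ($6$ generators), the almost square-free Veronese ($5$ generators), and $J = (x_1,x_2) \cap (x_3,x_4) = (x_1x_3,x_1x_4,x_2x_3,x_2x_4)$ ($4$ generators). For this last ideal the primary decomposition gives $bight(J)=2$, whereas $\projdim(R/J) = n - d + 1 = 3$ (since $\depth(R/J) = d-1 = 1$ by \cite{C} and Auslander--Buchsbaum); the hypothesis therefore excludes this case and $J$ falls into the Veronese or almost-Veronese family, which is SCM by Proposition \ref{P2}. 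The main obstacle is establishing the completeness of this three-case enumeration; all remaining steps are direct applications of results already proved in the paper.
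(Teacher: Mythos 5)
Your proposal is correct, and the converse direction takes a genuinely different route from the paper's. The paper uses the hypothesis $\projdim(R/J)=bight(J)$ at the outset to produce a height-$3$ associated prime realized as a colon ideal $\frak{p}=(J:y_1)$, writes $J=y_1\frak{p}+J'$ with $J'$ a degree-$2$ matroidal ideal in $K[y_2,y_3,y_4]$, and concludes via the three-variable classification; no enumeration of four-variable ideals ever appears. You instead classify all fully supported degree-$2$ matroidal ideals with $\gcd(J)=1$ in four variables and use the hypothesis only once, to exclude the single bad case $(x_1,x_2)\cap(x_3,x_4)$, where indeed $bight(J)=2<3=\projdim(R/J)$. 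The enumeration you flag as the main obstacle is in fact a routine finite check: a degree-$2$ matroidal ideal is the edge ideal of the basis graph of a loopless rank-$2$ matroid, and such graphs are exactly the complete multipartite ones; on four vertices, excluding a vertex lying in every edge (that is, $\gcd(J)=1$) leaves only the partitions $1+1+1+1$, $2+1+1$ and $2+2$, which are precisely your three cases, so there is no real gap here. Your approach buys transparency --- one sees exactly which ideal the hypothesis excludes, and it is the same ideal the paper uses elsewhere to show that localization does not detect the SCM property --- while the paper's approach buys uniformity, since the decomposition $J=y_1\frak{p}+J'$ is the same device driving Theorem \ref{T1} and Lemma \ref{L3}. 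Your explicit reduction to $\gcd(J)=1$ via Lemma \ref{L1} and Corollary \ref{C1} is also sound, and is in fact slightly more careful than the paper, whose $d=2$ argument absorbs that case tacitly.
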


\begin{proof}
$(\Longrightarrow)$. It follows by Proposition \ref{P0}.

$(\Longleftarrow)$. If $d=1,3,4$, then by Remark \ref{R1} $J$ is SCM.
Let $d=2$. By our hypothesis, there exists $\frak{p}\in\Ass(R/J)$ such that $\frak{p}=(J:y_1)$. Thus $J=y_1\frak{p}+J'$, where $J'$ is matroidal ideal of degree $2$ in $K[y_2,y_3,y_4]$. Hence $J'$ is a square-free Veronese ideal or an almost square-free Veronese ideal. Therefore by Proposition \ref{P2}, $J$ is SCM.
\end{proof}

\begin{Lemma}\label{L4}
Let $n\geq 5$ and $J$ be a matroidal ideal of degree $3$ in $R$ such that $J=y_1y_2\frak{p}+y_1y_3\frak{q}+y_2y_3\frak{q}$, where $\frak{p}$ and $\frak{q}$ are monomial prime ideals with $y_1,y_2\notin\frak{p}$ and $y_1,y_2,y_3\notin\frak{q}$ such that $\height(\frak{p})=n-2$, $\height(\frak{q})=n-3$. Then $J$ is SCM.
\end{Lemma}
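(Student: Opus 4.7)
The plan is to compute the Alexander dual $J^{\vee}$ explicitly and show that it has linear quotients; then Proposition \ref{qc} makes $J^{\vee}$ componentwise linear and Theorem \ref{T0}(b) forces $J$ to be SCM.

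First I would use the height and support constraints to pin down $\frak{p}=(y_3,y_4,\ldots,y_n)$ and $\frak{q}=(y_4,\ldots,y_n)$, so that $G(J)$ becomes fully explicit: $y_1y_2y_j$ for $j\geq 3$, together with $y_1y_3y_j$ and $y_2y_3y_j$ for $j\geq 4$. Next I would enumerate the minimal primes of $J$ by a short case analysis based on how many of $y_1,y_2,y_3$ a candidate prime contains. This should yield exactly six minimal primes: the three height-$2$ primes $(y_1,y_2),(y_1,y_3),(y_2,y_3)$ and the three height-$(n-2)$ primes $(y_i,y_4,\ldots,y_n)$ for $i=1,2,3$. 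Since $J$ is squarefree it is the intersection of these primes, and taking the Alexander dual converts this into
\[
J^{\vee}=(y_1y_2,\ y_1y_3,\ y_2y_3,\ y_1y_4\cdots y_n,\ y_2y_4\cdots y_n,\ y_3y_4\cdots y_n).
\]
Because $n\geq 5$, the degree sequence $(2,2,2,n-2,n-2,n-2)$ is nondecreasing in the listed order, so linear quotients can be tested with respect to this ordering.

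What remains is a colon-ideal calculation at each step against the ideal generated by the preceding generators. The colons among the three degree-$2$ generators are principal, generated by single variables from $\{y_1,y_2,y_3\}$. When a degree-$(n-2)$ generator $y_iy_4\cdots y_n$ is appended, each colon collapses to a two-variable ideal inside $\{y_1,y_2,y_3\}$, because the common squarefree factor $y_4\cdots y_n$ cancels against the preceding large generators, while the degree-$2$ generators contribute only variables from $\{y_1,y_2,y_3\}$. Verifying that every such colon is generated by variables establishes linear quotients.

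The main obstacle is confirming that the list of minimal primes is complete; in particular one must rule out, for example, height-$(n-3)$ primes of the form $(y_i,\frak{r})$ with $\frak{r}$ supported on $\{y_4,\ldots,y_n\}$, and check that no prime missing all of $y_1,y_2,y_3$ can contain $y_1y_2y_3$. Once the primary decomposition is secured, the linear-quotients verification is routine, and Proposition \ref{qc} together with Theorem \ref{T0}(b) closes the proof.
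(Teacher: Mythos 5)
Your proposal is correct and follows essentially the same route as the paper: both identify the six minimal primes (the paper writes them as $\frak{p}\cap(y_1,y_2)\cap(y_1,y_3)\cap(y_2,y_3)\cap(y_1,\frak{q})\cap(y_2,\frak{q})$, which with $\frak{p}=(y_3,\dots,y_n)$ and $\frak{q}=(y_4,\dots,y_n)$ is exactly your list), pass to the Alexander dual $J^{\vee}=(y_1y_2,y_1y_3,y_2y_3,y_1\frak{q}^{\vee},y_2\frak{q}^{\vee},\frak{p}^{\vee})$, and conclude via linear quotients, Proposition \ref{qc}, and Theorem \ref{T0}(b). You merely make explicit two steps the paper leaves to the reader, namely the completeness of the list of minimal primes and the colon-ideal computations.
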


\begin{proof}
Since $J=\frak{p}\cap(y_1y_2,y_1y_3\frak{q},y_2y_3\frak{q})$, it follows that $J=\frak{p}\cap(y_1,y_2)\cap(y_1,y_3)\cap(y_2,y_3)\cap(y_1,\frak{q})\cap(y_2,\frak{q})$.
Therefore $J^{\vee}=(y_1y_2,y_1y_3,y_2y_3,y_1{\frak{q}}^{\vee},y_2{\frak{q}}^{\vee},\frak{p}^{\vee})$. It is clear that $J^{\vee}$ is a monomial ideal with linear quotients and so by Proposition \ref{qc}, $J^{\vee}$ is componentwise linear. Thus $J$ is SCM.
\end{proof}

\begin{Lemma}\label{L5}
Let $n\geq 5$ and $J$ be a matroidal ideal of degree $3$ such that $$J=y_1y_2\frak{p}+y_1y_3\frak{q}_1+y_2y_4\frak{q}_2+J_1,$$ where $\frak{p}$, $\frak{q}_1$ and $\frak{q}_2$ are monomial prime ideals with $y_1,y_2\notin\frak{p}$, $y_1,y_2,y_3\notin\frak{q}_1$ and $y_1,y_2,y_4\notin\frak{q}_2$ such that $\height(\frak{p})=n-2$, $\height(\frak{q}_1)=n-3=\height(\frak{q}_2)$ and $J_1$ is a matroidal ideal in $R'=K[y_3,...,y_n]$. Then $G(J_1)=\{y_3y_4y_i\mid i=5,6,...,n\}$. In particular, $J$ is not SCM.
\end{Lemma}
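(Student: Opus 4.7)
The plan is to first make the primes $\frak{p}, \frak{q}_1, \frak{q}_2$ explicit, then determine $G(J_1)$ by two applications of the exchange property, and finally show that the degree-$2$ component of the Alexander dual $J^\vee$ cannot have a linear resolution. The height and avoidance conditions leave no freedom: $\frak{p} = (y_3, \ldots, y_n)$, $\frak{q}_1 = (y_4, \ldots, y_n)$, and $\frak{q}_2 = (y_3, y_5, y_6, \ldots, y_n)$. The summand description of $J$ partitions $G(J)$ into four blocks according to whether a generator is divisible by $y_1 y_2$, by $y_1$ alone, by $y_2$ alone, or by neither, the last block being precisely $G(J_1) \subseteq K[y_3, \ldots, y_n]$.

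For the description of $G(J_1)$, the inclusion $\{y_3 y_4 y_i : 5 \leq i \leq n\} \subseteq G(J_1)$ follows by applying the exchange property to $u = y_1 y_3 y_i \in y_1 y_3 \frak{q}_1$ and $v = y_2 y_4 y_i \in y_2 y_4 \frak{q}_2$ at the variable $y_1$: among the two candidate replacements $y_2 y_3 y_i$ and $y_3 y_4 y_i$, only the second can belong to any block of $G(J)$. For the reverse inclusion, suppose $u = y_a y_b y_c \in G(J_1)$ with $3 \leq a < b < c \leq n$ is not of the form $y_3 y_4 y_i$. A short check shows $b, c \geq 5$. Now apply the exchange property to $u$ and $v = y_1 y_2 y_b \in y_1 y_2 \frak{p}$ at the variable $y_a$: the candidate replacements $y_1 y_b y_c$ and $y_2 y_b y_c$ both lie outside every block of $G(J)$ (the first lacks $y_2, y_3$; the second lacks $y_1, y_4$). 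This contradicts the exchange property and forces $G(J_1) = \{y_3 y_4 y_i : 5 \leq i \leq n\}$.

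With the structure of $J$ completely explicit, the final claim amounts to showing that $J^\vee_{[2]}$ does not have a linear resolution. The degree-$2$ minimal generators of $J^\vee$ correspond to size-$2$ minimal vertex covers of the $3$-uniform hypergraph $H$ whose edges are the supports of the generators of $J$. Checking each pair against the four edge-families $\{1,2,i\}, \{1,3,i\}, \{2,4,i\}, \{3,4,i\}$ shows that the only size-$2$ vertex covers of $H$ are $\{1, 4\}$ and $\{2, 3\}$, so $J^\vee_{[2]} = (y_1 y_4, y_2 y_3)$. Since $y_1 y_4$ and $y_2 y_3$ have disjoint supports they form a regular $R$-sequence, and Theorem \ref{T0}(d) then gives $\reg(J^\vee_{[2]}) = 2 + 2 - 2 + 1 = 3$, which exceeds the generator degree $2$. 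Hence $J^\vee_{[2]}$ has no linear resolution, $J^\vee$ is not componentwise linear, and by Theorem \ref{T0}(b), $J$ is not SCM.

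The hardest step is the reverse inclusion describing $G(J_1)$: one must find a single pair $(u, v)$ and a single variable on which the exchange rules out every block of $G(J)$ simultaneously; the choice $v = y_1 y_2 y_b$ exchanged at $y_a$ is the key observation that makes this uniform across all three sub-cases $a \geq 5$, $a = 3$ with $b \geq 5$, and $a = 4$ with $b \geq 5$. The Alexander-dual portion is then a clean combinatorial enumeration.
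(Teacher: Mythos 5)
Your proof is correct, and it ends at the same place as the paper's: $G(J_1)=\{y_3y_4y_i\mid i\geq 5\}$ followed by $J^{\vee}_{[2]}=(y_1y_4,y_2y_3)$ and $\reg(J^{\vee}_{[2]})=3$ via Theorem \ref{T0}(d), hence $J^{\vee}$ is not componentwise linear and $J$ is not SCM. The route to the first half, however, is genuinely different and cleaner than the paper's. The paper splits into the cases $J_1=0$, $n=5$, and $n\geq 6$: it rules out $J_1=0$ by an ad hoc exchange contradiction, simply asserts $J_1=(y_3y_4y_5)$ when $n=5$, and for $n\geq 6$ works through the colon ideals $(J:y_3)$ and $(J:y_4)$ before invoking exchange for the reverse inclusion. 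Your two applications of the exchange axiom (to $y_1y_3y_i$ versus $y_2y_4y_i$ at $y_1$ for ``$\supseteq$'', and to $y_ay_by_c$ versus $y_1y_2y_b$ at $y_a$ for ``$\subseteq$'', after observing $b,c\geq 5$) treat all $n\geq 5$ uniformly and subsume the $J_1=0$ case, since the forward inclusion already forces $J_1\neq 0$. For the dual computation the paper writes out a full intersection of primes, whereas you enumerate only the size-two vertex covers, which is all that $J^{\vee}_{[2]}$ requires; same idea, less bookkeeping. The one step worth making explicit in a write-up is the ``not in any block'' elimination you use repeatedly: it is licensed by the fact that $J$ is generated in the single degree $3$, so a squarefree cubic lies in $G(J)$ exactly when it equals a degree-$3$ generator of one of the four summands, and the divisibility patterns of those summands are pairwise incompatible with the candidates you discard.
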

\begin{proof}
We consider two cases:
\begin{description}
	\item[ Case (a)] $J_1=0$, then we have  $y_1y_3y_5, y_2y_3y_4\in J$ but $y_2y_3y_5$ or $y_3y_4y_5$ are not elements of $J$. Thus $J$ is not a matroidal ideal and this is a contradiction.
	
	\item[ Case (b)]
	$J_1\neq 0$. 
	\begin{description}
		\item[1)]For $n=5$,  $J_1=(y_3y_4y_5)$ and
		\begin{equation*}
		J=(y_1,y_4)\cap(y_2,y_3)\cap(y_1,y_2,J_1)\cap(y_2,y_3,J_1)\cap(y_1,\frak{q}_2)\cap(y_2,\frak{q}_1)\cap\frak{p}.
		\end{equation*}
		Therefore $\reg(J^{\vee}_{[2]})=3$ and so $J$ is not SCM. 
		\item[2)] Suppose that $n\geq 6$. Then $(J:y_3)=(y_1y_2,y_2y_4,y_1\frak{q}_1,(J_1:y_3))$. If $y_iy_j\in (J:y_3)$ for $5\leq i\neq j\leq n$, then $y_2y_i\in(J:y_3)$ for $i\geq 5$, since $y_2y_4\in (J:y_3)$. But this is a contradiction. Therefore $y_3y_iy_j\notin J$ for all $5\leq i\neq j\leq n$. Consider $(J:y_4)$, we have
		$y_4y_iy_j\notin J$ for all $5\leq i\neq j\leq n$. Also, if $y_iy_jy_t\in J$ for different numbers $i,j,t$ with $5\leq i,j, t\leq n$, then since $y_1y_3y_i\in J$, we have $y_3y_iy_j\in J$ or $y_3y_iy_t\in J$ and this is a contradiction. Thus $G(J_1)\subseteq\{y_3y_4y_i\mid i=5,6,...,n\}$. On the other hand, since $y_2y_4y_i$ and $y_1y_3y_i$ are elements in $J$ for $i\geq 5$ we have $y_3y_4y_i\in J$ for $i\geq 5$. Hence $G(J_1)=\{y_3y_4y_i\mid i=5,6,...,n\}$.
		Therefore $$J=(y_1,y_4)\cap(y_2,y_3)\cap(y_1,y_2,J_1)\cap(y_1,\frak{q}_2)\cap(y_2,\frak{q}_1)\cap\frak{p}$$ and so $J^{\vee}=(y_1y_4,y_2y_3,y_1y_2J_1^{\vee},y_1\frak{q}_2^{\vee},y_2\frak{q}_1^{\vee},\frak{p}^{\vee})$. Thus $\reg(J^{\vee}_{[2]})=3$ and so $J$ is not SCM.
	\end{description} 
\end{description}

\end{proof}

\begin{Lemma}\label{L6}
Let $n\geq 6$ and $J$ be a matroidal ideal of degree $3$ such that $J=y_1y_2\frak{p}+y_1y_3\frak{q}+y_2J_1$, where $\frak{p}$ and $\frak{q}$ are monomial prime ideals with $y_1,y_2\notin\frak{p}$, $y_1,y_2,y_3\notin\frak{q}$ such that $\height(\frak{p})=n-2$, $\height(\frak{q})=n-3$ and $J_1$ is a matroidal ideal in $R'=K[y_3,...,y_n]$ with $\gcd(J_1)=1$. Then $J$ is not SCM matroidal.
\end{Lemma}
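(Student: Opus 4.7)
The plan is a proof by contradiction: I will argue that under the stated hypotheses the matroidal exchange property of $J$ must fail, so no such matroidal ideal $J$ with $\gcd(J_1)=1$ can exist when $n\geq 6$, whence the conclusion holds vacuously. The height and containment conditions force $\mathfrak{p}=(y_3,\ldots,y_n)$ and $\mathfrak{q}=(y_4,\ldots,y_n)$, so $G(J)$ is the disjoint union of the three families $\{y_1y_2y_k:3\leq k\leq n\}$, $\{y_1y_3y_k:4\leq k\leq n\}$, and $\{y_2m:m\in G(J_1)\}$.

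Since $\gcd(J_1)=1$ and $J_1\subset K[y_3,\ldots,y_n]$ is generated in degree $2$, I would pick some $y_ay_b\in G(J_1)$ not divisible by $y_3$, forcing $a,b\in\{4,\ldots,n\}$. The hypothesis $n\geq 6$ then gives $|\{4,\ldots,n\}\setminus\{a,b\}|\geq n-5\geq 1$, so I fix $k$ in that complement. The monomials $u:=y_2y_ay_b\in G(y_2J_1)$ and $v:=y_1y_3y_k\in G(y_1y_3\mathfrak{q})$ are then both in $G(J)$, have disjoint supports, and satisfy $\deg_{y_2}(u)=1>0=\deg_{y_2}(v)$.

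Matroidality demands some $j\in\supp(v)=\{1,3,k\}$ with $y_j(u/y_2)=y_jy_ay_b\in G(J)$, and I would close the argument by eliminating every candidate. Since $j\neq 2$ and $a,b\notin\{1,2,3\}$, the monomial $y_jy_ay_b$ contains no $y_2$, ruling out membership in $y_1y_2\mathfrak{p}$ and $y_2J_1$; and its support meets $\{y_1,y_3\}$ in at most one element (whichever $j$ contributes), ruling out $y_1y_3\mathfrak{q}$ as well. This contradiction shows no matroidal $J$ of the stated form exists, so in particular $J$ is not SCM matroidal. The main obstacle is keeping the final case check clean; the bound $n\geq 6$ enters exactly to make $\{4,\ldots,n\}\setminus\{a,b\}$ nonempty.
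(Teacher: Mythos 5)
Your argument is correct, but it takes a genuinely different route from the paper's. The paper assumes $J$ is SCM, localizes to conclude that $(J:y_2)=y_1\frak{p}+J_1$ is SCM, invokes Theorem \ref{T1} to force $J_1=y_i\frak{q}_1+J_2$ with $\frak{q}_1$ a height-$(n-3)$ prime, and then in two cases ($i=3$ and $i\neq 3$) exhibits a failure of the exchange property in a colon ideal $(J:y_j)$; so its contradiction is also ultimately with matroidality, but only after the SCM hypothesis has been spent on pinning down the shape of $J_1$. You observe that the SCM hypothesis is never needed: since $\gcd(J_1)=1$ there is a generator $y_ay_b$ of $J_1$ with $a,b\geq 4$, the bound $n\geq 6$ supplies $k\in\{4,\dots,n\}\setminus\{a,b\}$, and the pair $u=y_2y_ay_b$, $v=y_1y_3y_k$ then violates the exchange condition because each of $y_1y_ay_b$, $y_3y_ay_b$, $y_ky_ay_b$ misses $y_2$ and fails to contain both $y_1$ and $y_3$, so none lies in $G(J)$. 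Hence no matroidal ideal of the stated shape exists and the lemma holds vacuously --- a stronger conclusion, and in fact the same direct computation the authors themselves deploy inside Case (iii) of the proof of Theorem \ref{T2}; it also makes transparent exactly where $n\geq 6$ is used. What the paper's route buys is only uniformity with the neighbouring lemmas that genuinely need the SCM hypothesis; yours eliminates the appeal to Theorem \ref{T1} and the localization step. Two small points you should make explicit for completeness: that $J_1$ is nonzero and generated in degree $2$ (forced by $J$ being matroidal of degree $3$ together with $\gcd(J_1)=1$), so the generator $y_ay_b$ exists, and that the three displayed families of monomials really exhaust $G(J)$ (immediate, since all are square-free of degree $3$ and are distinguished by which of $y_1,y_2$ they contain).
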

\begin{proof}
By contrary, we assume that $J$ is SCM matroidal. Then $(J:y_2)=y_1\frak{p}+J_1$ is SCM matroidal and so by Theorem \ref{T1} $J_1$ is SCM matroidal of degree $2$.
From $\gcd(J_1)=1$, we have $J_1=y_i\frak{q}_1+J_2$, where $\frak{q}_1$ and $J_2$ are a monomial prime ideal of height $n-3$ and a matroidal ideal respectively in $R'=K[y_3,...,y_{i-1},y_{i+1},...,y_n]$. There are two main cases to consider.
\begin{description}
	\item[a)] $i=3$, then $(J:y_j)=(y_1y_2,y_1y_3,y_2y_3,y_2(J_2:y_j))$ when $j\neq 1,2,3$. Since $y_t\in(J_2:y_j)$ for $t\neq 1,2,3,j$, we have $y_2y_t$ and $y_1y_3$ are elements of $(J:y_j)$ but $y_1y_t$ or $y_3y_t$ are not elements of $(J:y_j)$. This is a contradiction.
	\item[b)]
	 $i\neq 3$, then $(J:y_i)=(y_1y_2,y_1y_3,y_2\frak{q}_1)$. Thus $y_2y_t$ and $y_1y_3$ for $t\neq 3$ are elements of $(J:y_i)$ but $y_1y_t$ or $y_3y_t$ are not elements of $(J:y_i)$ and this is a contradiction. Thus $J$ is not SCM matroidal.
\end{description}

\end{proof}

\begin{Lemma}\label{L7}
Let $n\geq 6$ and $J$ be a matroidal ideal of degree $3$ such that $J=y_1y_2\frak{p}+y_1y_3\frak{q}+y_2J_1+J_2$ or $J=y_1y_2\frak{p}+y_1y_3\frak{q}+y_2y_3\frak{q}+J_2$, where $\frak{p}$ and $\frak{q}$ are monomial prime ideals with $y_1,y_2\notin\frak{p}$, $y_1,y_2,y_3\notin\frak{q}$ such that $\height(\frak{p})=n-2$, $\height(\frak{q})=n-3$ and $J_1$ is a nonzero matroidal ideal in $R'=K[y_3,...,y_n]$ with $\gcd(J_1)=1$. Then $G(J_2)\subseteq\{y_3y_iy_j\mid  4\leq i\neq j\leq n\}$ and if $J_2\neq 0$, then $\supp(J_2)=\{y_3,y_4,...,y_n\}$. In particular, if $J=y_1y_2\frak{p}+y_1y_3\frak{q}+y_2y_3\frak{q}+J_2$, then $J_2=0$.
\end{Lemma}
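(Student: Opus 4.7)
The plan is to derive the claimed structure of $J_2$ directly from the matroidal exchange property of $J$, leveraging the inventory fact that under either decomposition every minimal generator of $J$ divisible by $y_1$ is also divisible by $y_2$ or $y_3$.

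First, I will show $G(J_2)\subseteq\{y_3y_iy_j\mid 4\leq i\neq j\leq n\}$ by contradiction. Assume $u=y_ay_by_c\in G(J_2)$ with $a,b,c\geq 4$. Since $y_a\in\frak{q}$, the monomial $v=y_1y_3y_a$ lies in $G(J)$. Apply the exchange property to the pair $v,u$ at the index $i=3$ (using $\deg_{y_3}(u)=0<1=\deg_{y_3}(v)$): there must exist $j$ with $\deg_{y_j}(v)<\deg_{y_j}(u)$ and $y_j(v/y_3)\in G(J)$. The only admissible values are $j\in\{b,c\}$, giving $y_j(v/y_3)=y_1y_ay_j$, a squarefree monomial involving $y_1$ but neither $y_2$ nor $y_3$. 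By the inventory above, no such monomial can belong to $G(J)$, contradicting matroidality.

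Assuming $J_2\neq 0$, the support claim is handled by a parallel exchange. By the first part $y_3\in\supp(J_2)$. For arbitrary $k\in\{4,\ldots,n\}$, pick $u=y_3y_py_q\in G(J_2)$; if $k\in\{p,q\}$ we are done, so suppose $k\neq p,q$ and take $v=y_1y_3y_k\in G(J)$. Applying exchange to $v,u$ at $i=1$ (with $\deg_{y_1}(u)=0<1=\deg_{y_1}(v)$), the only admissible indices are $j\in\{p,q\}$, forcing $y_j(v/y_1)=y_3y_ky_j\in G(J)$ for some such $j$. Since this monomial lacks $y_1$ and $y_2$, it belongs to $G(J_2)$, placing $y_k\in\supp(J_2)$.

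For the \emph{in particular} claim, observe that under the second decomposition the generators of $J$ involving $y_2$ but not $y_1$ are exactly the $y_2y_3y_k$ with $k\geq 4$. If $J_2\neq 0$, then by the first claim some $u=y_3y_py_q\in G(J_2)$ exists with $p,q\geq 4$. Take $v=y_1y_2y_p\in G(J)$ and apply exchange to $v,u$ at $i=3$: the only candidates are $j\in\{1,2\}$, and both $y_1y_py_q$ (lacking $y_2,y_3$) and $y_2y_py_q$ (which, under this decomposition, would require $y_3$) fail to be generators of $J$. This contradicts matroidality, so $J_2=0$. The main obstacle is identifying the correct partner $v$ and pivot index $i$ at each step; once chosen, the tightly constrained inventory of generators of $J$ containing $y_1$ (respectively $y_2$) forces the exchange to fail, and the three arguments all share the same template of fabricating a pivot that admits only monomials of forbidden shape.
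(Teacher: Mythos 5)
Your proof is correct and follows essentially the same route as the paper's: both derive all three claims from exchange-property contradictions played off against the generators in $y_1y_2\frak{p}$ and $y_1y_3\frak{q}$. The only cosmetic difference is that the paper packages each exchange inside a degree-two colon ideal such as $(J:y_t)$ or $(J:y_3)$, whereas you perform the equivalent exchanges directly on the degree-three generators of $J$.
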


\begin{proof}
Let us consider $J=y_1y_2\frak{p}+y_1y_3\frak{q}+y_2J_1+J_2$. Then we have $(J:y_t)=(y_1y_2,y_1y_3,y_2(J_1:y_t),(J_2:y_t))$ for some $t\geq 4$. If $y_iy_jy_t\in J$ for some different numbers $4\leq i,j, t\leq n$, then $y_iy_j\in(J:y_t)$. Since $y_1y_3\in (J:y_t)$, it follows that $y_1y_i\in (J:y_t)$ for some $i\geq 4$ and this is a contradiction. It therefore follows that $G(J_2)\subseteq\{y_3y_iy_j\mid  4\leq i\neq j\leq n\}$. Also,
$(J:y_3)=(y_1y_2,y_1\frak{q},y_2(J_1:y_3),(J_2:y_3))$. If $y_iy_j\in(J:y_3)$ for some $4\leq i\neq j\leq n$, then $y_iy_t\in(J:y_3)$ for all $t$ with $4\leq i\neq t\leq n$ since $y_1y_t\in(J:y_3)$. Hence $\supp(J_2)=\{y_3,y_4,...,y_n\}$.
The proof for the case $J=y_1y_2\frak{p}+y_1y_3\frak{q}+y_2y_3\frak{q}+J_2$ is similar to the above argument.
In particular, if $y_3y_iy_j\in J_2$ for some  $4\leq i\neq j\leq n$ then from $y_1y_2y_t\in J$ for some $4\leq i\neq t\neq j\leq n$ we have $y_iy_jy_t\in J$. This is a contradiction. Thus $J_2=0$.
\end{proof}

\begin{Proposition}\label{P3}
Let $n=5$ and $J$ be a matroidal ideal of degree $3$ such that $\gcd(J)=1$. Then $J$ is a SCM ideal if and only if $J=y_1y_2\frak{p}+y_1J_1+y_2J_2+J_3$, where $J_1$ and $J_2$ are SCM ideals with $\supp(J_1)=\supp(J_2)=\{y_3,y_4,y_5\}$, $J_3\subseteq J_1\cap J_2$ and satisfying in the one of the following cases:
\begin{itemize}
\item[(a)] $\gcd(J_1)=1$, $\gcd(J_2)=1$, or
\item[(b)] $\gcd(J_1)=y_3=\gcd(J_2)$ and $J_3=0$.
\end{itemize}
\end{Proposition}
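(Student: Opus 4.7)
The plan is to invoke Lemma \ref{L3} to produce the stated decomposition and then run a case analysis on the shapes of $J_1$ and $J_2$. By Lemma \ref{L3} with $d=3$ and $n=5$, a SCM matroidal ideal $J$ with $\gcd(J)=1$ has the form $J=y_1y_2\mathfrak{p}+y_1J_1+y_2J_2+J_3$, where $\mathfrak{p}=(y_3,y_4,y_5)$, each $J_i$ is a SCM matroidal ideal of degree $2$ with $\supp(J_i)=\{y_3,y_4,y_5\}$, and $J_3\in\{0,(y_3y_4y_5)\}$ is contained in $J_1\cap J_2$. Listing the matroidal ideals of degree $2$ in three variables with full support, each $J_i$ is either the full square-free Veronese $F=(y_3y_4,y_3y_5,y_4y_5)$ (gcd $=1$) or one of the ideals $y_k(y_l,y_m)$ for $\{k,l,m\}=\{3,4,5\}$ (gcd $=y_k$), giving four cases for the pair $(\gcd(J_1),\gcd(J_2))$.

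For the forward direction I would argue case by case. If both gcds are $1$, we are already in case (a). If both equal the same $y_k$ (WLOG $y_3$), I apply the exchange property to $u=y_3y_4y_5$ (assuming $J_3=(y_3y_4y_5)$) and $v=y_1y_2y_5\in J$ with $i=3$: the only candidates for $j$ are $1,2$, producing $y_1y_4y_5$ and $y_2y_4y_5$, neither of which lies in $J$, so $J_3=0$ and we land in case (b). In the mixed case, WLOG $J_1=F$ and $J_2=y_3(y_4,y_5)$, the exchange applied to $u=y_1y_4y_5$ and $v=y_2y_3y_4$ forces $y_3y_4y_5\in J$, so $J_3=(y_3y_4y_5)$ and $J$ is the almost square-free Veronese missing only $y_2y_4y_5$; swapping the labels $y_2$ and $y_3$ then rewrites the decomposition with both new side ideals equal to the Veronese and new $J_3'=0$, placing us in case (a). Finally, if the two gcds are nonzero and distinct (WLOG $\gcd(J_1)=y_3$, $\gcd(J_2)=y_4$), matroidality forces $J_3=(y_3y_4y_5)$ and the resulting ideal fits the hypotheses of Lemma \ref{L5} with $n=5$, $\mathfrak{q}_1=(y_4,y_5)$, $\mathfrak{q}_2=(y_3,y_5)$, which concludes that $J$ is not SCM, contradicting our assumption.

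For the backward direction, in case (a) the ideal $J$ is either the square-free Veronese of degree $3$ in five variables (when $J_3=(y_3y_4y_5)$) or an almost square-free Veronese of degree $3$ (when $J_3=0$); these are SCM by Theorem \ref{V} and Lemma \ref{L2} respectively. In case (b), the decomposition matches the form of Lemma \ref{L4} with $\mathfrak{q}=(y_4,y_5)$ and $\mathfrak{p}=(y_3,y_4,y_5)$, so $J$ is SCM.

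The main obstacle will be the mixed case of the forward direction, where I must first use the exchange property to pin down $J_3$ and then recognize that after relabeling the ideal fits case (a); a secondary technical point is checking that Lemma \ref{L5} applies cleanly at $n=5$, where the ``interior'' matroidal ideal appearing there is simply $(y_3y_4y_5)$.
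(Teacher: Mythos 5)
Your proposal is correct and follows essentially the same route as the paper: Lemma \ref{L3} for the decomposition, a case analysis on $(\gcd(J_1),\gcd(J_2))$ using the exchange property (with the same witness monomials in the equal-gcd case), Lemma \ref{L5} to kill the distinct-gcd case, a relabeling of variables in the mixed case, and Theorem \ref{V}, Lemma \ref{L2}, and Lemma \ref{L4} for sufficiency. The only cosmetic difference is that you merge the paper's two mixed sub-cases ($J_3=0$ and $J_3\neq 0$) into a single exchange-property argument forcing $y_3y_4y_5\in J$.
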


\begin{proof}
$(\Longleftarrow)$. Consider $(a)$. Then $J_1$ and $J_2$ are square-free Veronese ideal and $G(J_3)\subseteq\{y_3y_4y_5\}$. If $J_3=0$,
then $J$ is an almost square-free Veronese ideal and so by using Lemma \ref{L2}, $J$ is a SCM matroidal ideal.
If $J_3\neq 0$, then $J$ is a square-free Veronese ideal and so $J$ is a SCM matroidal ideal.

If we have the case $(b)$, then by Lemma \ref{L4} the result follows.

$(\Longrightarrow).$ Let $J$ be a SCM, then by Lemma \ref{L3}, $J$ has the presentation $J=y_1y_2\frak{p}+y_1J_1+y_2J_2+J_3$, where $J_1$ and $J_2$ are SCM matroidal ideals with $\supp(J_1)=\supp(J_2)=\{y_3,y_4,y_5\}$ and $J_3\subseteq J_1\cap J_2$.
\begin{description}
	\item[1)] If $\gcd(J_1)=y_3$ and $\gcd(J_2)=y_4$, then by Lemma \ref{L5} $J$ is not a SCM matroidal ideal and we don't have this case. 
	\item[2)] If $\gcd(J_1)=\gcd(J_2)=y_3$, then $J_3=0$. Let contrary, then $G(J_3)=\{y_3y_4y_5\}$ and $y_1y_2y_5,y_3y_4y_5\in J$ but $y_1y_4y_5$ or $y_2y_4y_5$ are not elements of $J$. This is a contradiction.
	\item[3)] If  $\gcd(J_1)=y_3$, $\gcd(J_2)=1$ and $J_3=0$, then $y_1y_3y_5, y_2y_4y_5\in J$ but $y_1y_4y_5$ or $y_3y_4y_5$ are not elements of $J$. Therefore $J$ is not matroidal and we don't have this case.
	\item[4)] If $\gcd(J_1)=y_3$, $\gcd(J_2)=1$ and $G(J_3)=\{y_3y_4y_5\}$, then by change of variables $(a)$ follows with $J_3=0$. 
\end{description}

\end{proof}

\begin{Proposition}\label{P4}
Let $n=6$ and let $J$ be a matroidal ideal of degree $4$ such that $\gcd(J)=1$. Then $J$ is a SCM ideal if and only if  $J=y_1y_2y_3\frak{p}+y_1y_2J_1+y_1y_3J_2+y_2y_3J_3+J_4$ such that $J_1, J_2,J_3$ are SCM matroidal ideals and satisfying in one of the following conditions:
\begin{itemize}
\item[(a)] for $i=1,2,3$, $\gcd(J_i)=1$ and $\mid G(J_4)\mid=3$,
\item[(b)] for $i=1,2,3$, $\gcd(J_i)=1$ and $\mid G(J_4)\mid=2$,
\item[(c)] for $i=1,2,3$, $\gcd(J_i)=1$ and $J_4=0$, or
\item[(d)] for $i=1,2,3$, $\gcd(J_i)=y_4$ and $J_4=0$.
\end{itemize}
\end{Proposition}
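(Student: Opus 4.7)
The plan is to apply Lemma~\ref{L3} to decompose $J$ canonically and then carry out a case analysis on the shapes of $(J_1,J_2,J_3)$ and the piece $J_4$.

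By Lemma~\ref{L3}, after choosing an associated prime of height $3$, I write
$$J = y_1y_2y_3\frak{p} + y_1y_2J_1 + y_1y_3J_2 + y_2y_3J_3 + J_4,$$
with $\frak{p}=(y_4,y_5,y_6)$, each $J_i$ ($i=1,2,3$) a SCM matroidal ideal of degree $2$ with $\supp(J_i)=\{y_4,y_5,y_6\}$, and $G(J_4)\subseteq\{y_1y_4y_5y_6,\,y_2y_4y_5y_6,\,y_3y_4y_5y_6\}$---these being the only square-free degree-$4$ monomials not captured by the previous summands. By the classification of matroidal ideals of degree $2$ in three variables with full support (Bandari--Herzog), each $J_i$ is either the full square-free Veronese $(y_4y_5,y_4y_6,y_5y_6)$ (when $\gcd(J_i)=1$) or of the form $y_k(y_l,y_m)$ with $\{k,l,m\}=\{4,5,6\}$ (when $\gcd(J_i)=y_k$).

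For the ``if'' direction, in case (a) I would note $|G(J)|=15=\binom{6}{4}$, so $J$ is the full square-free Veronese of degree $4$, CM by Theorem~\ref{V}; in case (b), $|G(J)|=14$ makes $J$ an almost square-free Veronese of degree $4<6$, hence SCM by Lemma~\ref{L2}. For case (c), I would compute the minimal primes of $J$ and verify that $J^{\vee}_{[2]}=(y_1y_2,y_1y_3,y_2y_3)$ and $J^{\vee}_{[3]}$ equals the full square-free Veronese of degree $3$ on six variables (since every square-free degree-$3$ monomial divides a generator of $J^{\vee}$), so every $J^{\vee}_{[i]}$ has a linear resolution, making $J^{\vee}$ componentwise linear. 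For case (d), I would show $J^{\vee}=(y_1,y_2,y_3,y_4)^{[2]}+y_5y_6(y_1,y_2,y_3,y_4)$ has linear quotients in the order: the six Veronese degree-$2$ generators first, then $y_1y_5y_6,y_2y_5y_6,y_3y_5y_6,y_4y_5y_6$; componentwise linearity then follows from Proposition~\ref{qc}.

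For the ``only if'' direction, assume $J$ is SCM. In Case~I, with all $\gcd(J_i)=1$ (so each $J_i=(y_4y_5,y_4y_6,y_5y_6)$), matroidal exchange rules out $|G(J_4)|=1$: were only $y_1y_4y_5y_6\in G(J)$, applying the exchange to $u=y_1y_4y_5y_6$ and $v=y_2y_3y_4y_5\in y_2y_3J_3$ at index $y_1$ would force either $y_2y_4y_5y_6$ or $y_3y_4y_5y_6$ into $G(J)$, a contradiction. Hence $|G(J_4)|\in\{0,2,3\}$, giving (c), (b), (a). In Case~II, with some $\gcd(J_i)\neq 1$, I use that localization preserves SCM: each colon $(J:y_i)$ is a SCM matroidal ideal of degree $3$ in $5$ variables, falling under Proposition~\ref{P3}. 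Matching the structural obstructions of Lemmas~\ref{L5}--\ref{L7} with these colons (after relabeling of $\{y_4,y_5,y_6\}$) leaves only two possibilities: all three $\gcd(J_i)$'s equal a common variable $y_k$ with $J_4=0$, which is case (d) after swapping $y_k$ with $y_4$; or a mixed configuration which, upon switching to a different height-$3$ associated prime of $J$ in Lemma~\ref{L3}, reorganizes as case (c).

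The main obstacle is the reshuffling step in Case~II: a mixed decomposition arising from one choice of $\frak{p}$ can correspond to case (c) under a different choice, so one must track how different minimal primes of $J$ yield different Lemma~\ref{L3} decompositions and verify that at least one is standard. Lemmas~\ref{L5}--\ref{L7} supply the obstructions that eliminate genuinely anomalous configurations, but weaving these together with the ``change of $\frak{p}$'' manoeuvre is the delicate part of the argument; the Case~I exchange argument and the backward case analysis are comparatively routine once the setup is fixed.
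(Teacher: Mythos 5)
Your overall architecture matches the paper's: decompose $J$ via Lemma~\ref{L3}, observe $G(J_4)\subseteq\{y_1y_4y_5y_6,y_2y_4y_5y_6,y_3y_4y_5y_6\}$, handle the backward direction by recognizing Veronese, almost Veronese, and linear-quotient duals (your componentwise computation in case (c) and your explicit quotient order in case (d) are fine, if slightly more laborious than the paper's direct reading of $J^{\vee}$ from the primary decomposition), and rule out $\lvert G(J_4)\rvert=1$ by the exchange property exactly as the paper does.

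The genuine gap is in your Case~II of the forward direction. You assert that besides the all-$\gcd(J_i)=y_k$ configuration there remains ``a mixed configuration which, upon switching to a different height-$3$ associated prime of $J$ in Lemma~\ref{L3}, reorganizes as case (c),'' and you yourself flag this reshuffling as delicate and do not carry it out. That step is both unproven and unnecessary: the paper's argument is that each colon $(J:y_j)$, $j=1,2,3$, is a SCM matroidal ideal of degree $3$ in five variables with $\gcd=1$, presented as $(J:y_1)=y_2y_3\frak{p}+y_2J_1+y_3J_2+(J_4:y_1)$ and so on, so Proposition~\ref{P3} applies directly to each colon and forces $\gcd(J_1)=\gcd(J_2)$ (and $(J_4:y_1)=0$ when this common gcd is a variable); chaining over $(J:y_1),(J:y_2),(J:y_3)$ shows the three gcds are either all $1$ or all equal to one common variable with $J_4=0$. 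In other words, mixed configurations are simply impossible, and no change of associated prime is needed. As written, your proof leaves open the possibility of a surviving mixed case, which would invalidate the claimed dichotomy (a)--(d); you need to replace the reshuffling manoeuvre with the colon-ideal application of Proposition~\ref{P3}.
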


\begin{proof}
$(\Longleftarrow).$ If we have $(a)$, then $J$ is a square-free Veronese ideal and so by Theorem \ref{V}, $J$ is SCM. Consider case $(b)$, then $J$ is an almost square-free Veronese ideal and so by Lemma \ref{L2}, $J$ is SCM. If we consider $(d)$, then by using the same proof of Lemma \ref{L4} $J^{\vee}$ has linear quotients and so $J$ is SCM.
Let $(c)$, then we have $J=\frak{p}\cap(y_1,y_2)\cap(y_1,y_3)\cap(y_2,y_3)\cap(y_1,J_3)\cap(y_2,J_2)\cap(y_3,J_1)$ and so $J^{\vee}=(y_1y_2,y_1y_3,y_2y_3,y_1J^{\vee}_3,y_2J^{\vee}_2,y_3J^{\vee}_1,\frak{p}^{\vee})$. That is, $J^{\vee}$ has linear quotients. Thus $J$ is SCM.

$(\Longrightarrow).$ Let $J$ be a SCM ideal. Then by Lemma \ref{L3}, $J=y_1y_2y_3\frak{p}+y_1y_2J_1+y_1y_3J_2+y_2y_3J_3+J_4$ and $J_1, J_2,J_3$ are SCM matroidal ideals. Let $\gcd(J_1)=y_4$. Since $(J:y_1)=y_2y_3\frak{p}+y_2J_1+y_3J_2+(J_4:y_1)$, $\gcd(J:y_1)=1$ and $(J:y_1)$ is a SCM matroidal ideal, by Proposition \ref{P3} it follows $\gcd(J_2)=y_4$ and $(J_4:y_1)=0$. Again by using $(J:y_2)$ and $(J:y_3)$, we obtain $\gcd(J_1)=\gcd(J_3)=\gcd(J_2)=y_4$ and $J_4=0$. Also, if for some $i$, $\gcd(J_i)=1$, then by Proposition \ref{P3} and by using $(J:y_1), (J:y_2)$ and $(J:y_3)$ we have $\gcd(J_i)=1$ for $i=1,2,3$. If $G(J_4)=\{y_1y_4y_5y_6\}$, then $J$ is not a matroidal ideal since $y_1y_4y_5y_6, y_2y_3y_5y_6\in J$, but $y_2y_4y_5y_6$ or $y_3y_4y_5y_6$ are not elements of $J$. Thus $J_4=0$ or $\mid G(J_4)\mid=2$ or $\mid G(J_4)\mid=3$ and this completes the proof.
\end{proof}

\begin{Proposition}\label{P5}
Let $n\geq 6$ and let $J$ be a matroidal ideal of degree $n-2$ such that $\gcd(J)=1$. Then $J$ is a SCM ideal if and only if
\begin{align*}
J=&y_1y_2...y_{n-3}\frak{p}+y_1y_2...y_{n-4}J_1+y_1y_2...y_{n-5}y_{n-3}J_2+...+\\
&y_1y_3...y_{n-3}J_{n-4}+y_2y_3...y_{n-3}J_{n-3}+J_{n-2}
\end{align*} 
such that $J_i$ is SCM matroidal ideal for all $i=1,..,n-3$ and satisfying in one of the following conditions:
\begin{itemize}
\item[(a)] for $i=1,...,n-3$, $\gcd(J_i)=1$ and $\mid G(J_{n-2})\mid=\binom{n-3}{2}$,
\item[(b)] for $i=1,...,n-3$, $\gcd(J_i)=1$ and $\mid G(J_{n-2})\mid=\binom{n-3}{2}-1$,
\item[(c)] for $i=1,...,n-3$, $\gcd(J_i)=1$ and $J_{n-2}=0$, or
\item[(d)] for $i=1,...n-3$,  $\gcd(J_i)=y_{n-2}$ and $J_{n-2}=0$.
\end{itemize}
\end{Proposition}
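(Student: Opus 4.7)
\textbf{Backward direction} ($\Leftarrow$): In case (a), $J$ is a square-free Veronese ideal of degree $n-2$, so Cohen--Macaulay by Theorem \ref{V} and hence SCM. In case (b), $J$ is an almost square-free Veronese ideal of degree $n-2$, so SCM by Lemma \ref{L2}. For cases (c) and (d), I would follow the template of Proposition \ref{P4} case (c): write $J$ as an intersection of prime ideals of heights at most $n-2$, so that $J^\vee$ becomes an ideal generated by square-free monomials of degrees $2$ and $3$, then list the degree-$2$ generators (coming from the height-$2$ primes $(y_i,y_j)$) before the degree-$3$ ones and verify directly that this ordering has linear quotients. Proposition \ref{qc} then gives $J^\vee$ componentwise linear, and Theorem \ref{T0}(b) gives that $J$ is SCM.

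\textbf{Forward direction} ($\Rightarrow$): Apply Lemma \ref{L3} to obtain the decomposition stated, in which each $J_i$ is an SCM matroidal ideal of degree $2$ supported on $\{y_{n-2},y_{n-1},y_n\}$. Since $J_i$ lives in only three variables, the degree-$2$ classification (Theorem \ref{T1} together with Proposition \ref{P2}) leaves only two possibilities for $J_i$: either the full square-free Veronese $(y_{n-2}y_{n-1},y_{n-2}y_n,y_{n-1}y_n)$ with $\gcd(J_i)=1$, or $y_k(y_l,y_m)$ with $\gcd(J_i)=y_k$ for some permutation $\{k,l,m\}=\{n-2,n-1,n\}$. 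I then induct on $n\geq 6$, with base case $n=6$ being Proposition \ref{P4}. For the inductive step I localize: each $(J:y_j)$ for $j\in\{1,\ldots,n-3\}$ is SCM (localization preserves SCM), and a direct inspection of the Lemma \ref{L3}-decomposition of $J$ shows that, after discarding redundant generators of degree $n-2$, $(J:y_j)$ is a matroidal ideal of degree $n-3$ in $K[\{y_1,\ldots,y_n\}\setminus\{y_j\}]$ with $\gcd=1$, whose own Lemma \ref{L3}-decomposition reuses $n-4$ of the original $J_i$'s. Feeding this into the induction hypothesis produces $\gcd$-constraints on those $J_i$'s, and running $j$ through $1,\ldots,n-3$ combines them into the uniform dichotomy: either $\gcd(J_i)=1$ for every $i$, or (after permuting $y_{n-2},y_{n-1},y_n$) $\gcd(J_i)=y_{n-2}$ for every $i$.

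\textbf{Pinning down $J_{n-2}$, and the main obstacle:} In the all-Veronese branch, any minimal generator of $J_{n-2}$ must be a square-free degree-$(n-2)$ monomial missing at least two of $\{y_1,\ldots,y_{n-3}\}$, forcing $G(J_{n-2})\subseteq\{y_1\cdots y_n/(y_{j_1}y_{j_2}):1\leq j_1<j_2\leq n-3\}$, a set of size $\binom{n-3}{2}$. Exchange arguments in the spirit of Lemmas \ref{L5}, \ref{L6}, \ref{L7} show that if two or more of these monomials are omitted from $G(J_{n-2})$ then the matroidal exchange property of $J$ fails, leaving only the cardinalities $0$, $\binom{n-3}{2}-1$, $\binom{n-3}{2}$, which are cases (a), (b), (c). In the $\gcd(J_i)=y_{n-2}$ branch, a parallel exchange argument forces $J_{n-2}=0$, giving (d). The main obstacle is precisely this combinatorial bookkeeping: propagating the $\gcd$-information from the individual localizations $(J:y_j)$ to a uniform global statement, and then pinning $|G(J_{n-2})|$ to one of the three allowed values via matroidal exchange. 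Both steps generalize the arguments of Proposition \ref{P4} and Lemmas \ref{L5}--\ref{L7}, but the bookkeeping grows with $n$ and must be handled with care.
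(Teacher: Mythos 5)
Your proposal follows essentially the same route as the paper: the backward direction handles (a) and (b) via Theorem \ref{V} and Lemma \ref{L2} and handles (c) and (d) by exhibiting linear quotients for $J^{\vee}$ from the prime intersection decomposition, while the forward direction uses Lemma \ref{L3}, induction on $n$ with base case Proposition \ref{P4}, and localization at the $y_j$ to propagate the $\gcd$ dichotomy. The only (minor) divergence is in pinning down $\lvert G(J_{n-2})\rvert$, where you invoke matroidal exchange directly whereas the paper argues that omitting two or more generators would force some localization $(J:y_i)$ to violate the induction hypothesis; both are the same kind of combinatorial bookkeeping and neither changes the substance of the argument.
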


\begin{proof}
$(\Longleftarrow).$ 

If case $(a)$ holds, then $J$ is a square-free Veronese ideal and so by Theorem \ref{V}, $J$ is SCM.
 Let $(b)$, then $J$ is an almost square-free Veronese ideal and so by Lemma \ref{L2}, $J$ is SCM. 
  If $(d)$, then by using the same proof of  Lemma \ref{L4}, $J^{\vee}$ has linear quotients and so $J$ is SCM.
Let $(c)$, then we have
\begin{align*}
J=&\frak{p}\cap(y_1,y_2)\cap...\cap(y_1,y_{n-3})\cap(y_2,y_3)\cap...\cap(y_2,y_{n-3})\\
&\cap...\cap(y_{n-4},y_{n-3})\cap(y_1,J_{n-3})\cap...\cap(y_{n-3},J_1)
\end{align*} 
 and so $$J^{\vee}=(y_1y_2,...,y_1y_{n-3},y_2y_3,...,y_2y_{n-3},...,y_{n-4}y_{n-3},y_1J^{\vee}_{n-3},...,y_{n-3}J^{\vee}_1,\frak{p}^{\vee}).$$ Since $J_i$ are square-free Veronese ideals, it follows that $J^{\vee}$ has linear quotients. That is, $J$ is SCM.

$(\Longrightarrow).$ Let $J$ be a SCM ideal. Then by Lemma \ref{L3},
\begin{align*}
J=&y_1y_2...y_{n-3}\frak{p}+y_1y_2...y_{n-4}J_1+y_1y_2...y_{n-5}y_{n-3}J_2+...+\\
&y_1y_3...y_{n-3}J_{n-4}+y_2y_3...y_{n-3}J_{n-3}+J_{n-2}
\end{align*}
 and $J_i$ are SCM matroidal ideals for all $i=1,...,n-3$. We use induction on $n\geq 6$. If $n=6$, then the result follows by Proposition \ref{P4}. Let $n>6$ and $\gcd(J_1)=y_{n-2}$. 
 $$(J:y_1)=y_2y_3...y_{n-3}\frak{p}+y_2...y_{n-4}J_1+y_2...y_{n-5}y_{n-3}J_2+...+y_3...y_{n-3}J_{n-4}+(J_{n-2}:y_1),$$ $\gcd(J:y_1)=1$ and $(J:y_1)$ is a SCM matroidal ideal, by induction hypothesis it follows $\gcd(J_i)=y_{n-2}$ for $i=1,...,n-4$ and $(J_{n-2}:y_1)=0$. Again by using $(J:y_i)$ for $i=2,...,n-3$ and by using induction hypothesis, $\gcd(J_i)=y_{n-2}$ for $i=1,...n-3$ and $J_{n-2}=0$. Also, if for some $i$, $\gcd(J_i)=1$, then again by using $(J:y_i)$ for $i=1,...,n-3$ and by using induction hypothesis we have $\gcd(J_i)=1$ for $i=1,...,n-3$. If $\mid G(J_{n-2})\mid<\binom{n-3}{2}-1$, then there exists $1\leq i\leq{n-3}$ such that $\mid G(I:y_i)\mid<\binom{n-4}{2}-1$ and this is a contradiction. Thus $J_{n-2}=0$ or $\mid G(J_{n-2})\mid=\binom{n-3}{2}$ or $\mid G(J_{n-2})\mid=\binom{n-3}{2}-1$ and this completes the proof.
\end{proof}

\begin{Theorem}\label{T2}
Let $n=6$ and let $J$ be a matroidal ideal of degree $3$ such that $\gcd(J)=1$. Then $J$ is a SCM ideal if and only if $J=y_1y_2\frak{p}+y_1J_1+y_2J_2+J_3$ such that $J_1$ and $J_2$ are SCM matroidal ideals and satisfying in one of the following conditions:
\begin{itemize}
\item[(a)] $\mid G(J_3)\mid=4$ and one of $J_1$ or $J_2$ is an almost square-free Veronese ideal and the other is a square-free Veronese ideal,
\item[(b)] $\mid G(J_3)\mid=3$, $J_1$, $J_2$ are square-free Veronese ideals,
\item[(c)] $J_3=0$ and  $J_1=J_2$ are square-free Veronese ideals or almost square-free Veronese ideals either $J_3=0$ and $\gcd(J_1)=y_3=\gcd(J_2)$.
\end{itemize}
\end{Theorem}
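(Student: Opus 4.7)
The plan is to apply Lemma \ref{L3} with $n=6$ and $d=3$ to decompose any SCM matroidal $J$ with $\gcd(J)=1$ as $J=y_1y_2\frak{p}+y_1J_1+y_2J_2+J_3$, where $\frak{p}=(y_3,y_4,y_5,y_6)$, both $J_1$ and $J_2$ are SCM matroidal of degree $2$ fully supported on $\{y_3,y_4,y_5,y_6\}$, and $J_3\subseteq J_1\cap J_2$ is generated in degree $3$ in $K[y_3,\dots,y_6]$. Since $J_1,J_2$ live in four variables, Proposition \ref{P2} gives that if $\gcd(J_i)=1$ then $J_i$ is either a square-free Veronese ideal (SFV) or a strict almost SFV, while the full-support condition forces $J_i=y_k(y_3,\dots,\widehat{y_k},\dots,y_6)$ whenever $\gcd(J_i)=y_k\neq 1$.

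For the necessity direction I would run a case analysis on $(\gcd(J_1),\gcd(J_2))$. Two distinct non-trivial gcds are ruled out directly by Lemma \ref{L5}. When $\gcd(J_1)=\gcd(J_2)=y_k$, the second form of Lemma \ref{L7} forces $J_3=0$, yielding the second half of case (c). The mixed case $\gcd(J_1)=y_k$, $\gcd(J_2)=1$ is eliminated by combining Lemmas \ref{L6} and \ref{L7} with the fact that $(J:y_2)$ must again be SCM matroidal and hence fit the classification of Proposition \ref{P3}. In the remaining case $\gcd(J_1)=\gcd(J_2)=1$, one uses the containment $J_3\subseteq J_1\cap J_2$ together with matroidal exchanges between generators of $J_3$ and of $y_1y_2\frak{p}$ to force $|G(J_3)|\in\{0,3,4\}$, showing that $|G(J_3)|=4$ requires at least one of $J_1,J_2$ to be the full SFV (case (a)), $|G(J_3)|=3$ forces both to be SFV (case (b)), and $J_3=0$ forces $J_1=J_2$ (first half of case (c)).

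For sufficiency, in each of the listed sub-configurations I would exhibit $J$ as an explicit intersection of monomial primes whose Alexander dual $J^{\vee}$ admits linear quotients; Proposition \ref{qc} then gives that $J^{\vee}$ is componentwise linear, and Theorem \ref{T0}(b) concludes that $J$ is SCM. Case (c) second half reduces to Lemma \ref{L4}, case (c) first half uses Lemma \ref{L2}, and cases (a), (b) follow the template of Propositions \ref{P4} and \ref{P5}, writing $J^{\vee}$ as a Veronese-type piece plus a short chain of generators. The main technical obstacle is the combinatorial bookkeeping in the necessity direction: precisely excluding $|G(J_3)|\in\{1,2\}$ via carefully chosen test-pairs for matroidal exchange, and then coupling the resulting constraints to the SFV versus strict almost-SFV status of $J_1$ and $J_2$ so that exactly the four configurations listed in (a), (b), and the two halves of (c) appear.
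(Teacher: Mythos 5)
Your overall architecture matches the paper's: decompose $J$ via Lemma \ref{L3}, prove sufficiency by exhibiting $J^{\vee}$ with linear quotients (or invoking Lemma \ref{L2}, Lemma \ref{L4} and Theorem \ref{V}), and prove necessity by a case analysis constrained by Lemmas \ref{L5}, \ref{L6}, \ref{L7} and Proposition \ref{P2}. The sufficiency half of your plan is essentially the paper's. But the necessity half has two genuine gaps. First, your claim that the mixed case $\gcd(J_1)=y_k$, $\gcd(J_2)=1$ is ``eliminated'' is wrong. The presentation produced by Lemma \ref{L3} is not unique, and the paper shows that mixed-gcd presentations do occur for genuinely SCM ideals: in its Case (ii).1 ($\lvert G(J_3)\rvert=3$, $\gcd(J_1)=y_3$, $\gcd(J_2)=1$) and in Case (iii) ($\lvert G(J_3)\rvert=2$, $J_2$ almost square-free Veronese missing $y_5y_6$), the ideal is SCM and is brought to form (b) or (c) only after a change of variables and a new presentation. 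Lemma \ref{L6} cannot do the job you assign it, since it only treats $J=y_1y_2\frak{p}+y_1y_3\frak{q}+y_2J_1$ with no $J_3$ term; once $J_3\neq 0$, Lemma \ref{L7} merely constrains $G(J_3)$ rather than producing a contradiction. If you literally discard the mixed case you either exclude SCM ideals that belong to the classification or leave those configurations unclassified.

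Second, in the case $\gcd(J_1)=\gcd(J_2)=1$ your proposed tool --- matroidal exchange between generators of $J_3$ and of $y_1y_2\frak{p}$ --- is not strong enough to exclude all the bad configurations. The paper must rule out, for $\lvert G(J_3)\rvert=4$, the possibility that both $J_1$ and $J_2$ are strictly almost square-free Veronese; when the two missing cubes are disjoint (say $y_1y_3y_5$ and $y_2y_4y_6$), the ideal \emph{is} matroidal, and the only obstruction is homological: $J^{\vee}_{[3]}=(y_1y_3y_5,y_2y_4y_6)$ has $\reg=5$, so $J^{\vee}$ is not componentwise linear. Similarly, the configurations where the missing cubes share one variable are excluded by showing that a colon ideal such as $(J:y_3)$ or $(J:y_6)$ fails to be SCM via Theorem \ref{T1}, not by an exchange argument. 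Without these regularity and localization computations your plan cannot complete the exclusion of $\lvert G(J_3)\rvert\in\{1,2\}$ and of the double-almost-Veronese case, so the necessity direction remains open as written.
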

\begin{proof}
$(\Longleftarrow)$. If we consider the $(a)$ or $(b)$, then $J$ is a square-free Veronese ideal or an almost square-free Veronese ideal and so  $J$ is SCM. Consider $(c)$ and suppose that $\gcd(J_1)=\gcd(J_2)=y_3$. Then by using Lemma \ref{L4}, $J$ is SCM. Also, for $(c)$ if $J_1=J_2$ are square-free Veronese ideals or almost square-free Veronese ideals, we have $J^{\vee}=(y_1y_2,y_1J^{\vee}_2,y_2J^{\vee}_1,\frak{p}^{\vee})$ and so $J^{\vee}$ has linear quotients. Thus $J$ is SCM.

$(\Longrightarrow).$ Let $J$ be a SCM ideal. Then by Lemma \ref{L3}, $J=y_1y_2\frak{p}+y_1J_1+y_2J_2+J_3$ and $J_1$ and $J_2$ are SCM matroidal ideals and $J_3\subseteq J_1\cap J_2$ with $\suppressfloats(J_3)=\{y_3,y_4,y_5,y_6\}$.
Therefore $\mid G(J_3)\mid\leq4$. We have four cases:
\begin{description}
	\item[Case (i)]
 Suppose that $\mid G(J_3)\mid=4$ ,then by Lemmas \ref{L5} and \ref{L7} we have $\gcd(J_1)=1=\gcd(J_2)$. By Proposition \ref{P2}, we have the case $(a)$ if we prove $J_1$ and $J_2$ aren't  almost square-free Veronese ideals in the same time. Let contrary, if $y_1y_3y_5,y_2y_3y_5$ are not elements of $J$, then $y_1y_2y_3,y_3y_4y_5\in J$. But $y_1y_3y_5$ or $y_2y_3y_5$ are not elements of $J$ and this is a contradiction. 

If $y_1y_3y_5, y_2y_3y_6$ are not elements of $J$, then
\begin{align*}
(J:y_3)=&(y_1y_2,y_1(y_4,y_6),y_2(y_4,y_5),y_4y_5,y_4y_6,y_5y_6)\\
=&y_4(y_1,y_2,y_5,y_6)+(y_1y_2,y_1y_6,y_2y_5,y_5y_6).
\end{align*}
By Theorem \ref{T1}, $(y_1y_2,y_1y_6,y_2y_5,y_5y_6)$ is not SCM and this is a contradiction. 

If $y_1y_3y_5, y_2y_4y_6$ are not elements of $J$, then
$(J^{\vee}_{[3]})=(y_1y_3y_5, y_2y_4y_6)$ and so $\reg(J^{\vee}_{[3]})=5$. Thus $J$ is not SCM and this is a contradiction. 
\item[Case (ii)] Let $\mid G(J_3)\mid=3$. We consider the following cases.
\begin{description}
	\item[1)] If $\gcd(J_1)=y_3$ and $\gcd(J_2)=1$, then 
	$G(J_3)=\{y_3y_4y_5,y_3y_4y_6,y_3y_5y_6\}$, by Lemma \ref{L7}.
	$\gcd(J_2)=1$, so by Proposition \ref{P2}, $J_2$ is a square-free Veronese ideal or an almost square-free Veronese ideal. If $J_2=(y_2y_3y_4,y_2y_3y_5,y_2y_3y_6,y_2y_4y_5,y_2y_4y_6)$ is an almost square-free Veronese ideal, then $y_3y_5y_6,y_1y_2y_4\in J$ but $y_1y_5y_6$ or $y_2y_5y_6$ either $y_4y_5y_6$ are not elements of $J$ and this is a contradiction. So $J_2$ is a square-free Veronese ideal and by using a new presentation for $J$ and change of variables we get $J_1$ and $J_2$ are square-free Veronese ideals and $J_3=0$ and this is the case $(c)$.
	
	\item[2)]
	If $\gcd(J_1)=y_3$ and $\gcd(J_2)=y_4$, then by Lemma \ref{L5} we have $\mid G(J_3)\mid=2$ and this is a contradiction.

	\item[3)] If $\gcd(J_1)=y_3=\gcd(J_2)$, then $y_1y_2y_4,y_3y_4y_5\in J$ but $y_1y_4y_5$ or $y_2y_4y_5$ are not elements of $J$ and this is a contradiction.
	
	\item[4)] Let $\gcd(J_1)=1=\gcd(J_2)$. Suppose that $J_1$ is a square-free Veronese ideal and $J_2$ is an almost square-free Veronese ideal. We assume that $J_2=(y_2y_3y_4,y_2y_3y_5,y_2y_3y_6,y_2y_4y_5,y_2y_4y_6)$. Since $\mid G(J_3)\mid=3$, we can assume that one of the element $y_3y_5y_6$ or $y_3y_4y_6$ are not in $J$. If $y_3y_5y_6\notin J$, then $y_2y_3y_5,y_1y_5y_6\in J$ but $y_2y_5y_6$ or $y_3y_5y_6$ are not elements of $J$ and this is a contradiction.
	If $y_3y_4y_6\notin J$, then $(J:y_6)=(y_1y_2,y_1(y_3,y_4,y_5),y_2(y_3,y_4),y_3y_5,y_4y_5)$. Therefore by using Theorem \ref{T1} this is not SCM. Thus we do not have this case.
	Also, by the same argument of the {\bf Case (i)}, $J_1$ and $J_2$ are not almost square-free Veronese ideals in the same time. Therefore $J_1$, $J_2$ are square-free Veronese ideals and we have the case $(b)$.
	
\end{description}

\item [Case (iii)]
Let $\mid G(J_3)\mid=2$. Then by Lemmas \ref{L5}, \ref{L7}, we have $\gcd(J_1)=y_3$, $\gcd(J_2)=1$ or $\gcd(J_1)=1=\gcd(J_2)$. If $\gcd(J_1)=y_3$, $\gcd(J_2)=1$, then we can assume that $G(J_3)=\{y_3y_4y_5,y_3y_4y_6\}$. Since $\gcd(J_2)=1$, by Proposition \ref{P2} $J_2$ is square-free Veronese ideal or almost Veronese ideal. If $J_2$ is square-free Veronese ideal, then $y_2y_5y_6,y_3y_4y_5\in J$ but $y_3y_5y_6$ or $y_4y_5y_6$ are not elements of $J$ and this is a contradiction. Let $J_2$ be an almost square-free Veronese ideal and we assume that $y_5y_6$ is the only element which is not in $J_2$.
In this case by change of variables we have $J_3=0$ and $J_1=J_2$ are almost square-free Veronese ideals and and this is the case $(c)$.
If $y_4y_5$ is the only element which is not in $J_2$, then $y_3y_4y_5, y_2y_4y_6$ are elements of $J$ but $y_2y_4y_5$ or $y_4y_5y_6$ are not elements of $J$ and this is a contradiction. Also, if $y_4y_6$ is the only element which is not in $J_2$, then again $J$ is not matroidal and this is a contradiction.
Now we can assume that $J_3=0$. If $\gcd(J_1)=y_3$, then by Lemmas \ref{L5}, \ref{L7} we have
$\gcd(J_2)=1$ or $\gcd(J_2)=y_3$. If $\gcd(J_2)=1$, then $y_1y_3y_5$ and $y_2y_iy_j$ are elements of $J$ for some $i,j=4,5,6$, but $y_1y_iy_j$ or $y_3y_iy_j$ are not elements of $J$ and this is a contradiction. Therefore $\gcd(J_2)=y_3$ and this is the case $(c)$. Also, if $\gcd(J_1)=1$ then $\gcd(J_2)=1$. If $J_1\neq J_2$ are almost square-free Veronese ideals, then again by using the above argument $J$ is not matroidal and this is a contradiction.
Therefore $J_1=J_2$ are square-free Veronese ideals or almost square-free Veronese ideals.

\item[Case (iv)]
Let $\mid G(J_3)\mid=1$. Then by Lemmas \ref{L5}, \ref{L7}, we have $\gcd(J_1)=1=\gcd(J_2)$. Therefore by Proposition \ref{P2} $J_1$ and $J_2$ are square-free Veronese ideals or almost Veronese ideals. By choosing one element from $J_1$ and the only element from $J_3$, we have $\mid G(J_3)\mid\geq 2$. This is a contradiction.
\end{description}
\end{proof}




\begin{thebibliography}{}
\bibitem{BH}
S. Bandari and J. Herzog, {\it Monomial localizations and polymatroidal ideals}, Eur. J. Comb.,
{\bf 34}(2013), 752-763.

\bibitem{C}
H. J. Chiang-Hsieh, {\it Some arithmetic properties of matroidal ideals}, Comm. Algebra, {\bf 38}(2010), 944-952.

\bibitem{CH}
A. Conca and J. Herzog, {\it Castelnuovo-Mumford regularity of products of ideals}, Collect. Math., {\bf 54}(2003), 137-152.

\bibitem{E}
D. Eisenbud, {\it Commutative Algebra with a view towards Algebraic Geometry}, GTM., {\bf 150}, Springer, Berlin, (1995).
\bibitem{ER}
J. Eagon and V. Reiner, {\it Resolutions of Stanley-Reisner rings and Alexander duality}, J. Pure Appl. Algebra, {\bf 130}(1998), 265-275.

\bibitem{FV}
C. Francisco and A. Van Tuyl, {\it Some families of componentwise linear monomial ideals}, Nagoya Math. J, {\bf 187}(2007), 115-156.
\bibitem{GS}
D. R. Grayson and M. E. Stillman, {\it Macaulay 2, a software system for research in algebraic geometry}, Available at
{http://www.math.uiuc.edu/Macaulay2/}.
\bibitem{HH}
J. Herzog and T. Hibi, {\it Componentwise linear ideals}, Nagoya Math. J, {\bf 153}(1999), 141-153.
\bibitem{HH1}
J. Herzog and T. Hibi, {\it Discrete polymatroids}, J. Algebraic Combin., {\bf 16}(2002), 239-268.
\bibitem{HH2}
J. Herzog and T. Hibi, {\it Cohen-Macaulay polymatroidal ideals}, Eur. J. Comb., {\bf 27}(2006), 513-517.
\bibitem{HH3}
J. Herzog and T. Hibi, {\it Monomial ideals}, GTM., {\bf 260}, Springer, Berlin, (2011).
\bibitem{HRW}
J. Herzog, V. Reiner and V. Welker, {\it Componentwise linear ideals and Golod rings}, Michigan Math. J., {\bf 46}(1999), 211-223.

\bibitem{HTa}
J. Herzog and Y. Takayama, {\it Resolutions by mapping cones}, Homology Homotopy Appl., {\bf 4}(2002), 277-294.
\bibitem{HD}
L. T. Hoa and N. D. Tam, {\it On some invariants of a mixed product of ideals}, Arch. Math., {\bf 94}(2010), 327-337.
\bibitem{HT}
L. T. Hoa and N. V. Trung, {\it On the Castelnuovo-Mumford regularity and the arithmetic degree of monomial ideals}, Math. Z., {\bf 229}(1998), 519-537.
\bibitem{KM}
Sh. Karimi and A. Mafi, {\it On stability properties of powers of polymatroidal ideals}, to appear in Collect. Math.
\bibitem{MN}
A. Mafi and D. Naderi, {\it A note on linear resolution and polymatroidal ideals}, arXiv:1707.09843.
\bibitem{S}
R. P. Stanley, {\it Combinatorics and commutative algebra,} 2nd. ed., Birkh\"auser, Boston, (1996).
\bibitem{Te}
N. Terai, {\it Generalization of Eagon-Reiner theorem and $h$-vectors of graded rings}, Preprint (2000).
\bibitem{Va}
A. Van Tuyl, {\it A beginner's guide to edge and cover ideals}, Lecture notes in Math., {\bf 2083}(2013), 63-94.
\bibitem{V}
R. H. Villarreal, {\it Monomial Algebras}, Monographs and Research Notes in Mathematics, Chapman and Hall/CRC, (2015).

\end{thebibliography}
\end{document}